\newtheorem{thm}{Theorem}[section]
\newtheorem{remark}[thm]{Remark}
\newtheorem*{Satz*}{Satz}
\newtheorem{Lemma}[thm]{Lemma}
\newtheorem{Corollary}[thm]{Corollary}
\newcommand{\mathset}[1]{{\left\{#1\right\}}}
\newcommand{\absolute}[1]{\left\lvert#1\right\rvert}
\DeclareMathOperator{\Spec}{Spec}
\DeclareMathOperator{\trace}{tr}
\DeclareMathOperator{\ind}{ind}
\DeclareMathOperator{\res}{res}
\DeclareMathOperator{\dist}{dist}
\title{Topological Applications of $p$-Adic Divergence and Gradient Operators}
\author{Patrick Erik Bradley}
\date{\today}
\begin{document}

\maketitle

\begin{abstract}
$p$-Adic divergence and gradient operators are constructed giving rise to $p$-adic vertex Laplacian operators used by Z\'u\~niga in order to study Turing patterns on graphs, as well as their edge Laplacian counterparts. It is shown that the Euler characteristic of a finite graph can be expressed via
traces of certain  heat kernels 
associated with these new operators. This result is applied to the extraction of topological information from Mumford curves via heat kernels.
\end{abstract}

\section{Introduction}

The well-known  relationship between the divergence and gradient operators for functions on $\mathds{R}^n$ and corresponding vector fields reveals them as mutually adjoint linear operators. In equations describing the motion of particles, these two operators are used in order to describe advection and diffusion. The former uses the divergence operator, and the latter the Laplacian which is gradient followed by divergence.
In a discrete (finite) setting, these two operators are mimicked by the incidence matrix (divergence) and its transpose (gradient). The mathematical significance of these is that the singular value theorem says that the two Laplacians which can be constructed from these (edge Laplacian and vertex Laplacian) have a common spectrum whose difference lies only in the multiplicity of eigenvalue zero. In the case of finite graphs, this difference turns out to coincide with the Euler characteristic of the graph. 
Using the  heat kernels corresponding to the two Laplacians then leads to the well-known result that the index
of the gradient operator equals the difference between the traces of these heat kernels. 
Already Roth had seen one ``half'' of this theorem by proving that the trace of the heat kernel associated with the (vertex) Laplacian of a finite graph has an expansion into a term containing the Euler characterstic and decaying exponential functions over the spectrum \cite{Roth1984}.
\newline

It is a desideratum to obtain results towards a $p$-adic formulation of an index theorem for $p$-adic manifolds, possibly using integral operators defined on these. In this context, the present article poses a first contribution for $p$-adic integral operators defined via finite graphs in a similar manner as W.\ Z\'u\~niga-Galindo has done in the study of Turing patterns \cite{ZunigaNetworks}. The difference is that here, the vertices are represented by  compact open subsets of the field $\mathds{Q}_p$ of $p$-adic numbers.
\newline

Using a
$p$-adically defined divergence operator $d^\alpha$ and a corresponding gradient operator $\delta^\alpha$ for $\alpha>0$, which do not seem to have precedents in the literature, corresponding $p$-adic vertex and edge Laplacian operators 
$D^\alpha$ and $\Delta^\alpha$
can be defined which are self-adjoint and positive semi-definite.
The vertex operator $D^\alpha$ then is the generalisation of a Z\'u\~niga operator also to be found in \cite{brad_heatMumf}.
\newline

The main result of this article is given as follows:
\newline

\noindent
{\bf Theorem.} \emph{
If every vertex of a finite graph $G$ has a loop-edge, then
\[
\lim\limits_{t\to\infty}\left(
\trace(p_{D^\alpha,t})-\trace(p_{\Delta_\rho^\alpha,t})\right)
=\chi(G')=\ind(d^\alpha)
\]
for $\alpha>0$.
Otherwise, it holds true that
\[
\lim\limits_{\ell\to\infty}\lim\limits_{t\to\infty}
\left(
\trace(p_{D_\ell^\alpha,t})-\trace(p_{\Delta_\rho^\alpha,t})
\right)
=\chi(G')=\ind(d^\alpha)
\]
for $\alpha>0$. The graph $G'$ is obtained from $G$ by removing all loop-edges.
}
\newline

The expression 
$p_{D^\alpha,t}$ denotes the heat kernel associated with $D^\alpha$, whereas
$p_{D_\ell^\alpha,t}$ and $p_{\Delta_\rho^\alpha}$ are heat kernels associated with finite-dimensional  operators derived from $D^\alpha$ and $\Delta^\alpha$ where cut-offs are made through selecting eigenfunctions. The reason for the need to take limits is that on the one hand, the kernel of $\Delta^\alpha$ is infinite-dimensional, and the heat kernel of $D^\alpha$ does not have a well-defined trace unless all vertices of the graph $G$ have loop-edges attached to them. Furthermore, the index of $d^\alpha$, in order to be finite, is defined using  the kernel of the operator $\Delta_\rho^\alpha$ instead of that of $\Delta^\alpha$, but is using the kernel of $D^\alpha$.
\newline

Finally, the theorem is applied to Mumford curves 
by constructing three kinds of graphs from a good fundamental domain in the sense of \cite[Def.\ I.4.1.3]{GvP1980}. In this way, topological information can be extracted from a Mumford curve by  heat kernels. This continues in a sense the hearing of shapes for graphs or $p$-adic geometric objects via $p$-adic analysis, as done in \cite{BL_shapes_p,Brad_HearingGenusMumf}.
For applications, this could be of interest, since on the one hand, using hierarchies  allows a faster processing of structured data, and on the other hand using integral operators to extract topological information from data is  a main topic of applied research.
\newline

The goal of the ongoing project \emph{Distributed Simulation of 
Processes on Buildings and City Models} (project number 469999674)
aims to provide a framework for simulating processes like e.g.\ heat flows on building and city models on distributed systems via $p$-adic numbers. 
The building or city model topologies can be represented as a $T_0$-topology on  the finite set of atomic objects of the model connected by a binary relation which can be interpreted as a boundary relationship \cite{BradPaul2010}. The Hasse diagram is then a minimal representation of this $T_0$-topology.  For details on the properties of such topological spaces, cf.\ \cite{Alexandrov1937}.
\newline

The choice of $p$-adic numbers as an underlying structure for the DFG project
seems natural because of  their inherent hierarchical structure, and the fact that processing hierarchical information in general is efficient due to their tree-like structuring. Since $p$-adic analysis is relatively young, the mathematics behind processes on hierarchically structured data still needs to be developped in many cases.
The concern of a large part of $p$-adic analysis is diffusion, whereas other types of processes, e.g.\ advection, have not yet been studied to the author's knowledge. 
The gradient and divergence operators developped here can fill a part of this gap by providing a $p$-adic advection operator:
\[
D_+^\alpha=\delta_+^\alpha d_+^\alpha\colon L^2(\Omega_{V})\to L^2(\Omega_{V})
\]
for $\alpha>0$, using the notation of Section 3.
This is in fact a natural generalisation of advection operators built from the outgoing incidence matrix of finite graphs introduced in \cite{CM2011}. The study of $p$-adic advection equations like
\[
\frac{\partial}{\partial t}u(x,t)+D_+^\alpha u(x,t)=0
\]
is left for future work in the ongoing project.
\newline

Another significance of the results of this article for the DFG-project is that the task of retrieving topological information from $p$-adic diffusion processes becomes more feasible. A further significance is that Mumford curves as compact $p$-adic manifolds are gradually coming   into the focus of data analysis, 
not only because of the results of the previous section, but also because of recent project results showing that a hierarchical dataset can be viewed as being sampled from a Mumford curve together with a regular differential $1$-form on it. This leads to
the question about the possible shapes such a Mumford curve can take, including the question about its genus.
\newline

The following section lists some preliminaries helpful for reading this article.
Section 3 constructs the divergence and gradient operators, shows that they are mutually adjoint, thus giving rise to two kinds of $p$-adic Laplacians, and 
shows how to obtain the Betti numbers of a finite graph from these. Section 4 constructs the heat kernels, using the result from \cite{ZunigaNetworks} that the $p$-adic vertex Laplacian defines a strong Markov process, and proves the main theorem.
Section 5 applies the main theorem to Mumford curves in order to extract topological information via heat kernels. 

\section{Preliminaries}

Some familiarity with $p$-adic numbers is assumed. The field of $p$-adic numbers is denoted as $\mathds{Q}_p$, and the $p$-adic absolute value with $\absolute{\cdot}_p$. As a locally compact abelian group, $\mathds{Q}_p$ is endowed with a Haar measure, denoted as $dx$ or $dy$, depending on the variable of integration used. It is assumed normalised such that 
\[
\int_{\mathds{Z}_p}dx=1
\]
where $\mathds{Z}_p\subset \mathds{Q}_p$ is the $p$-adic unit disc, and where integration means that of real- or complex-valued functions on $\mathds{Q}_p$ against the Haar measure.
\newline

From $p$-adic analysis, variants of  the integral operator version of the Vladimirov operator are used. The latter is studied e.g.\ in \cite{VVZ1994}. The variants of interest here are first defined in \cite{ZunigaNetworks}. Of use for determining the spectra of such operators are the Kozyrev wavelets, where such is a function of the form
\[
\psi(x)=p^{-\frac{d}{2}}
\chi(p^{d-1}jx)\Omega\left(\absolute{p^{d}x-n}_p\right)
\]
for $d\in\mathds{Z}$, $j\in\mathset{1,\dots,p-1}$, and $n\in\mathds{Q}_p$ the fractional part of a representative of an element of $\mathds{Q}_p/\mathds{Z}_p$, cf.\ \cite{Kozyrev2002}.
By the fractional part of a $p$-adic number
\[
x=\sum\limits_{k=-m}\alpha_kp^k
\]
with $m\in\mathds{Z}$ $\alpha_k\in\mathset{0,d\dots,p-1}$
is meant the expression
\[
\{x\}_p=\sum\limits_{k=-m}^{-1}\alpha_kp^k
\]
as a rational number.
The indicator function $\Omega(\absolute{f(x)}_p)$ is defined as
\[
\Omega(\absolute{f(x)}_p)=\begin{cases}
1,&\absolute{f(x)}_p\le 1
\\
0,&\text{otherwise}
\end{cases}
\]
for a given function $f\colon \mathds{Q}_p\to\mathds{Q}_p$. 
\newline

Some theory of finite graphs, viewed as $1$-dimensional simplicial complexes is also helpful. A lot more than needed for this article about the homology and cohomology of such objects can be found in e.g.\ \cite{HatcherAT}.
\newline

The last section deals with applications to Mumford curves. These are treated in depth in \cite{GvP1980,FP2004}. What is needed here, however, is only their relationship to finite simple graphs in order to see that the $p$-adic version of an index theorem can be applied to these.

\section{$p$-Adic Divergence and Gradient Operators}

Let $G=(V,E)$ be a finite undirected  graph with vertex set $V$ and edge set $E$.
The graph $G$ is assumed to have at most one loop-edge attached to a vertex.
By a loop-edge is meant an edge connecting a vertex with itself.
In particular, it is 
simple after removing all loop-edges. 
\newline 

The subset of loop-edges is denoted as $E_0$, and 
\[
E'=E\setminus E_0
\]
is the set of all simple edges of $G$.
By choosing for each edge from $E'$ a positive orientation, obtain the sets $E_+,E_-$ of positively and negatively oriented simple edges of $G$.
This leads to one way of viewing simple graphs as $1$-dimensional simplicial complexes which can be learned e.g.\ in \cite[Ch.\ 2]{HatcherAT} and elsewhere.
\newline

Generalising an idea of Wilson Z\'u\~niga-Galindo, identify
the vertices $v\in V$ with pairwise disjoint $p$-adic compact open sets $U_v\subset\mathds{Q}_p$ satisfying the condition
\begin{align}\label{setCondition}
\forall v,w\in V\colon v\neq w\;\Rightarrow\;
\forall x\in U_v\,\forall y\in U_w\colon\absolute{x-y}_p=\dist(U_v,U_w)
\end{align}
where $\dist$ is the $p$-adic distance between sets, defined as
\[
\dist(A,B)=\inf\mathset{\absolute{x-y}_p\mid x\in A,\;y\in B}
\]
for subsets $A,B\subseteq\mathds{Q}_p$, and 
define now the compact open sets:
\begin{align*}
\Omega_V&=\bigsqcup\limits_{v\in V}U_v\subset \mathds{Q}_p
\\
\Omega_{E_0}&=\bigsqcup\limits_{e\in E_0 }U_{o(e)}\times U_{o(e)}\subset\mathds{Q}_p^2
\\
\Omega_{E_+}&=\bigsqcup\limits_{e\in E_+}
U_{o(e)}\times U_{t(e)}
\subset\mathds{Q}_p^2
\\
\Omega_{E_-}&=\bigsqcup\limits_{e\in E_+}
U_{t(e)}\times U_{o(e)}\subset\mathds{Q}_p^2
\\
\Omega_{E'}&=\Omega_{E_+}\cup\Omega_{E_-}\subset\mathds{Q}_2^2
\\
\Omega_E&=\Omega_{E_0}\cup\Omega_{E_+}\cup\Omega_{E_-}\subset\mathds{Q}_p^2
\end{align*}
where $o(e)\in V$ denotes the origin vertex of an oriented edge $e\in E_+$, and $t(e)$ its terminal vertex.
Note that if $e\in E_0$, then $o(e)=t(e)$.
\newline

Let $\alpha>0$, and  define the following $p$-adic divergence operators:
\[
\delta_\bullet^\alpha\colon L^2(\Omega_{E_\bullet})\to L^2(\Omega_V),\;h\mapsto \delta_\bullet^\alpha h
\]
with 
\[
(\delta_\bullet^\alpha h)(x)=
\int_{\Omega_{V}}h(x,y)\absolute{x-y}_p^{-\alpha}\,dy
\]
for $h\in L^2(\Omega_{E_\bullet})$ and $\bullet\in\mathset{+,-,0}$.
Define also the following $p$-adic gradient operators:
\[
d_0^\alpha\colon L^2(\Omega_V)\to L^2(\Omega_{E_0}),\;f\mapsto d_0^\alpha f
\]
with
\[
(d_0^\alpha f)(x,y)
=\frac{f(x)-f(y)}{\absolute{x-y}_p^\alpha}
\]
for $(x,y)\in \Omega_{E_0}$,
\[
d_+^\alpha\colon L^2(\Omega_V)\to L^2(\Omega_{E_+}),\;
f\mapsto d_+^\alpha f
\]
with 
\[
(d_+^\alpha f)(x,y)=\frac{f(x)-f(y)}{2\absolute{x-y}_p^\alpha}
\]
for $(x,y)\in \Omega_{E_+}$, and
\[
d_-^\alpha\colon L^2(\Omega_V)\to L^2(\Omega_{E_+})
\]
with
\[
(d_-f)(x,y)=(d_+f)(y,x)
\]
for $(x,y)\in \Omega_{E_-}$.
\newline

The inner products on $L^2(\Omega_{E_\bullet}),L^2(\Omega_V)$ will be denoted as
\[
\langle\cdot,\cdot\rangle_{E_\bullet},\quad\langle\cdot,\cdot\rangle_V
\]
and are defined in a standard way via integration.
Notice that the operators $\delta_0^\alpha,d_0^\alpha$ are each unbounded operators on their respective Hilbert spaces.
\newline

Define the spaces
\begin{align*}
L^2(\Omega_E)_\sigma&:=\mathset{h\in L^2(\Omega_E)\mid \forall (x,y)\in \Omega_E\colon h(x,y)=-h(y,x)}
\\
L^2(\Omega_{E_0})_\sigma&:= L^2(\Omega_E)_\sigma\cap L^2(\Omega_{E_0})
\end{align*}
with inner product for the larger space denoted as $\langle\cdot,\cdot\rangle_E$. Any function $h\in L^2(\Omega_E)_\sigma$ can be uniqueley decomposed as
\[
h=h_0+h_+-h_-
\]
with $h_0\in L^2(\Omega_{E_0})_\sigma$, $h_+\in L^2(\Omega_{E_+})$ and $h_-\in L^2(\Omega_{E_-})$.
Define now the operator
\[
\delta^\alpha\colon
L^2(\Omega_E)_\sigma\to L^2(\Omega_V)
\]
as
\[
\delta^\alpha=\delta_0^\alpha+
\delta^\alpha_+ +\delta_-^\alpha
\]
and finally the operator
\[
d^\alpha\colon L^2(\Omega_V)\to L^2(\Omega_E)_\sigma
\]
as
\[
d^\alpha=d_0^\alpha+d_+^\alpha+d_-^\alpha
\]
for $\alpha>0$.

\begin{Lemma}\label{AdvectionPair}
It holds true that
\[
\langle d^\alpha f,h
\rangle_{E}
=\langle f,\delta^{\alpha} h\rangle_V
\]
for $h\in L^2(\Omega_E)_\sigma$, $f\in L^2(\Omega_V)$.
In other words, $(d^\alpha,\delta^{\alpha})$ is a pair of adjoint operators.
\end{Lemma}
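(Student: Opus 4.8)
The plan is to verify the identity $\langle d^\alpha f,h\rangle_E=\langle f,\delta^\alpha h\rangle_V$ by writing both inner products as double integrals over $\Omega_E\subset\mathds{Q}_p^2$ and $\Omega_V$, and matching them after an application of Fubini. Using the decomposition $h=h_0+h_++h_-$ together with $d^\alpha=d_0^\alpha+d_+^\alpha+d_-^\alpha$ and $\delta^\alpha=\delta_0^\alpha+\delta_+^\alpha+\delta_-^\alpha$, and the fact that the three pieces $\Omega_{E_0},\Omega_{E_+},\Omega_{E_-}$ are pairwise disjoint, the claim reduces by bilinearity to a separate adjointness check on each edge type. In each case I would insert $(d_\bullet^\alpha f)(x,y)=(f(x)-f(y))\absolute{x-y}_p^{-\alpha}$ up to the normalising constant, split the integral into an $f(x)$-term and an $f(y)$-term, and then exploit the symmetry $\absolute{x-y}_p=\absolute{y-x}_p$ of the kernel together with the antisymmetry $h(x,y)=-h(y,x)$ defining $L^2(\Omega_E)_\sigma$ to fold the $f(y)$-term back onto the $f(x)$-term.

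For the simple edges I would first invoke condition \eqref{setCondition}: since $o(e)\neq t(e)$ for $e\in E'$, the kernel $\absolute{x-y}_p^{-\alpha}$ is \emph{constant}, equal to $\dist(U_{o(e)},U_{t(e)})^{-\alpha}$, on each block $U_{o(e)}\times U_{t(e)}$, so the operators $d_\pm^\alpha,\delta_\pm^\alpha$ are bounded and Fubini applies without any scruple. The coordinate swap $(x,y)\mapsto(y,x)$ carries $\Omega_{E_+}$ onto $\Omega_{E_-}$ and leaves the kernel invariant, while the relation between the $h_+$- and $h_-$-components forced by antisymmetry converts the $f(y)$-term on one orientation into a copy of the $f(x)$-term on the other. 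Summing the two orientations then reproduces exactly $\int_{\Omega_V}h(x,y)\absolute{x-y}_p^{-\alpha}\,dy=(\delta^\alpha h)(x)$ paired against $f$, and the factor $\tfrac12$ built into $d_\pm^\alpha$ is precisely what compensates for each simple edge being integrated over both of its orientations.

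The loop contribution is where the genuine obstacle sits. On the diagonal blocks $U_v\times U_v$ the kernel $\absolute{x-y}_p^{-\alpha}$ is unbounded, which is exactly why $d_0^\alpha$ and $\delta_0^\alpha$ are flagged as unbounded operators, and the double integral need not converge absolutely for arbitrary $L^2$ inputs, so Fubini may not be applied blindly. I would resolve this by working on the dense domain of locally constant functions $f$, for which $f(x)-f(y)$ vanishes on a neighbourhood of the diagonal and hence kills the singularity, making $d_0^\alpha f$ square-integrable and all integrals absolutely convergent; the identity verified on this core then yields the adjoint relation in the sense appropriate to unbounded operators. With convergence secured, the algebra is the same symmetric folding as before, now with the swap $(x,y)\mapsto(y,x)$ acting \emph{within} the single symmetric block $U_v\times U_v$, where the symmetry of the kernel and the antisymmetry of $h_0\in L^2(\Omega_{E_0})_\sigma$ collapse the two halves and return $\langle f,\delta_0^\alpha h_0\rangle_V$. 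The two things to watch throughout are therefore the justification of the interchange of integration on the singular diagonal, and the correct bookkeeping of the normalising constants so that loops and simple edges are weighted consistently against the single integral defining $\delta^\alpha$.
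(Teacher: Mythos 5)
Your proposal is correct and follows essentially the same route as the paper: decompose $h$ and the operators by edge type, handle the simple edges by the coordinate swap $(x,y)\mapsto(y,x)$ combined with the antisymmetry of $h$ (with the factor $\tfrac12$ absorbing the double count of orientations), and treat the singular loop-edge blocks by verifying the identity on locally constant test functions, whose supports avoid the diagonal, before extending by continuity. You correctly isolate the only genuine analytic issue (the unboundedness of $d_0^\alpha$, $\delta_0^\alpha$ on the diagonal blocks) and resolve it exactly as the paper does.
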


\begin{proof}
Let $f\in L^2(\Omega_V)$ and $h\in L^2(\Omega_{E})_\sigma$. Observe that
\[
h_+(x,y)=h(x,y),\quad h_-(y,x)=-h(x,y)
\]
for $(x,y)\in \Omega_{E_+}$. 
It follows that
\begin{align*}
\langle d^\alpha f,h_++h_-\rangle_{E}
&=\int_{\Omega_{E_+}}
\frac{f(x)-f(y)}{2\absolute{x-y}_p^{\alpha}}\overline{h(x,y)}\,dx\,dy
-
\int_{\Omega_{E_+}}
\frac{f(y)-f(x)}{2\absolute{x-y}_p^{\alpha}}
\overline{h(x,y)}\,dx\,dy
\\
&=\int_{\Omega_{E_+}}
\frac{f(x)-f(y)}{\absolute{x-y}_p^\alpha}\overline{h(x,y)}\,dx\,dy
\end{align*}
on the one hand.
On the other hand, \begin{align*}
\langle f,\delta_+^{\alpha} h_+\rangle_V
&
=\int_{\Omega_V}\int_{\Omega_V}
f(x)\overline{h_+(x,y)}
\absolute{x-y}_p^{-\alpha}\,dx\,dy
\\
&=\int_{\Omega_{E_+}}f(x)\overline{h_+(x,y)}\absolute{x-y}_p^{-\alpha}\,dx\,dy
\end{align*}
and
\begin{align*}
\langle f,\delta_-^{\alpha}h_-\rangle_V&=
\int_{\Omega_V}\int_{\Omega_V}
f(x)\overline{h_-(x,y)}\absolute{x-y}_p^{-\alpha}\,dx
\,dy
\\
&=\int_{\Omega_{E_-}}f(x)\overline{h_-(x,y)}\absolute{x-y}_p^{-\alpha}\,dx\,dy
\\
&=-\int_{\Omega_{E_+}}
f(x)
\overline{h_+(y,x)}\absolute{x-y}_p^{-\alpha}\,dy\,dx
\\
&=-\int_{\Omega_{E_+}}
f(y)\overline{h_+(x,y)}\absolute{x-y}_p^{-\alpha}\,dx\,dy
\end{align*}
which takes care of the part concerning the simple edges of $G$. For the loop-edges, assume that $f\in \mathcal{D}(\Omega_V)$, and $h_0\in\mathcal{D}(\Omega_{E_0})_\sigma$, where $\mathcal{D}(Z)$ means the space of locally constant functions on a space $Z$ with compact support, and the subscript $\sigma$ again that $h_0(x,y)=-h_0(y,x)$ for $(x,y)\in\Omega_{E_0}$.
Then it is readily seen that $d^\alpha_0 f\in\mathcal{D}(\Omega_{E_0})_\sigma$ and $\delta_0^\alpha h_0\in\mathcal{D}(V)$. Furthermore, the supports of $h_0$ and $d^\alpha_0 f$ are away from a neighbourhood of the diagonal. Hence, they can be written as a disjoint union of products of two disjoint discs, where both sides each run through  a finite set of discs in $\Omega_V$.
This allows the  decompositions:
\[
h_0=h_{0,+}-h_{0,-},\quad
d_0^\alpha f=(d_0^\alpha f)_+-(d_0^\alpha f)_-
\]
where each part is a  function which is positive, locally constant (and compactly supported).
Then a similar calculation as in the simple-edge case yields that
\begin{align*}
\langle d_0^\alpha f,h_0\rangle_{E_0}
=\langle f,\delta_0^\alpha\rangle_V
\end{align*}
for test functions. By continuity of the inner products, this identity extends to the (restricted) $L^2$-spaces.
Hence, putting things together, arrive at
\[
\langle f,\delta^{\alpha}h\rangle_{V}=\langle f,\delta_0^\alpha h_0\rangle_V+
\langle f,\delta_+^{\alpha}h_+\rangle_V-\langle f,\delta_-^{\alpha}h_-\rangle_V
=\langle d^\alpha f,h\rangle_{E}
\]
as asserted.
\end{proof}

Define the following Laplacian operators:
\begin{align*}
D^\alpha&=\delta^{\alpha}d^\alpha\colon L^2(\Omega_V)\to L^2(\Omega_V)
&\text{($p$-adic vertex Laplacian)}
\\
\Delta^\alpha&=d^\alpha\delta^{\alpha}\colon L^2(\Omega_E)_\sigma\to L^2(\Omega_E)_\sigma
&\text{($p$-adic edge Laplacian)}
\end{align*}
for a finite  simple graph $G=(V,E)$.
Explicitly, $D^\alpha$ is given as follows:

\begin{Corollary}\label{Laplacians}
It holds true that
\[
D^\alpha=\delta_0^\alpha d_0^\alpha+
\delta_+^\alpha d^\alpha_++\delta_-^\alpha d_-^\alpha\quad\text{and}\quad
\Delta^\alpha=d_0^\alpha\delta_0^\alpha+
d_+^\alpha\delta_+^\alpha+d_-^\alpha\delta_-^\alpha
\]
are self-adjoint, positive semi-definite,
and
\[
D^\alpha f(x)=
\int_{\mathset{y\in\Omega_V\mid (x,y)\in \Omega_E}}\frac{f(x)-f(y)}{\absolute{x-y}^{2\alpha}}\,dy
\]
for $f\in L^2(\Omega_V)$ and $\alpha>0$.
In the case that $G$ is a simple graph, i.e.\ $E_0=\emptyset$, the operators $D^\alpha$ and $\Delta_\alpha$ are bounded, otherwise these operators are unbounded.
\end{Corollary}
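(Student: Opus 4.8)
The plan is to deduce everything from the adjointness already established in Lemma~\ref{AdvectionPair}. That lemma gives $\delta^\alpha=(d^\alpha)^*$, so the two Laplacians have the shape $D^\alpha=(d^\alpha)^*d^\alpha$ and $\Delta^\alpha=d^\alpha(d^\alpha)^*$. For operators of this form self-adjointness and positive semi-definiteness are automatic: on the dense domain of locally constant compactly supported functions one computes
\[
\langle D^\alpha f,f\rangle_V=\langle d^\alpha f,d^\alpha f\rangle_E=\norm{d^\alpha f}_E^2\ge 0,
\]
and symmetrically $\langle \Delta^\alpha h,h\rangle_E=\norm{\delta^\alpha h}_V^2\ge 0$. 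In the loopless case this already yields self-adjointness, while in the presence of loop-edges, where $d_0^\alpha,\delta_0^\alpha$ are unbounded, I would invoke von Neumann's theorem that $T^*\overline{T}$ is self-adjoint for a densely defined closable $T$, taking $T=d^\alpha$ on the test-function domain.

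Next I would record the orthogonal decomposition $L^2(\Omega_E)=L^2(\Omega_{E_0})\oplus L^2(\Omega_{E_+})\oplus L^2(\Omega_{E_-})$, which holds because $\Omega_{E_0},\Omega_{E_+},\Omega_{E_-}$ are pairwise disjoint subsets of $\mathds{Q}_p^2$. Each $d_\bullet^\alpha$ maps into the single summand $L^2(\Omega_{E_\bullet})$, while $\delta_\bullet^\alpha$ annihilates the other two summands, since $(\delta_\bullet^\alpha h)(x)=\int h(x,y)\absolute{x-y}_p^{-\alpha}\,dy$ effectively integrates over $\{y:(x,y)\in\Omega_{E_\bullet}\}$ and a function supported off that block contributes nothing. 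Hence every cross term $\delta_\bullet^\alpha d_{\bullet'}^\alpha$ with $\bullet\ne\bullet'$ vanishes, and $\delta^\alpha d^\alpha$ collapses to the claimed block-diagonal sum; the identical argument applies to $\Delta^\alpha$.

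The explicit kernel then follows by substituting the defining formulae into each surviving block. The loop block $\delta_0^\alpha d_0^\alpha$ contributes $\int_{U_v}\tfrac{f(x)-f(y)}{\absolute{x-y}_p^{2\alpha}}\,dy$ with coefficient one, since $d_0^\alpha$ carries no prefactor. For a simple edge the factor $\tfrac12$ built into $d_\pm^\alpha$ must be matched against the two oriented copies $U_{o(e)}\times U_{t(e)}\subset\Omega_{E_+}$ and $U_{t(e)}\times U_{o(e)}\subset\Omega_{E_-}$, and I expect the careful tracking of these orientations, together with the sign forced by the antisymmetry $h(x,y)=-h(y,x)$ on $L^2(\Omega_E)_\sigma$, to be the one genuinely fiddly step; when it is done correctly each neighbouring disc of $x$ is integrated with total coefficient one, producing the single integral over $\{y\in\Omega_V:(x,y)\in\Omega_E\}$ asserted in the corollary.

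Finally, for the boundedness dichotomy I would use the set condition~\eqref{setCondition}: when $E_0=\emptyset$ it forces $\absolute{x-y}_p=\dist(U_{o(e)},U_{t(e)})>0$ to be constant on each product block, so the kernels $\tfrac12\absolute{x-y}_p^{-\alpha}$ of $d_\pm^\alpha$ are bounded; since $\Omega_V$ has finite Haar measure, $d^\alpha$ is then a bounded (indeed Hilbert--Schmidt) operator, whence $D^\alpha=(d^\alpha)^*d^\alpha$ and $\Delta^\alpha=d^\alpha(d^\alpha)^*$ are bounded as well. If some vertex carries a loop, the block $\delta_0^\alpha d_0^\alpha$ has the genuinely singular kernel $\absolute{x-y}_p^{-2\alpha}$ on the diagonal product $U_v\times U_v$; this is a Vladimirov-type operator, and diagonalising it by the Kozyrev wavelets supported in $U_v$ exhibits an unbounded family of eigenvalues, so $D^\alpha$ — and hence $\Delta^\alpha$ — is unbounded. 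The main obstacles throughout are the sign-and-factor bookkeeping in the third step and making the self-adjointness precise on the correct operator domain in the unbounded case.
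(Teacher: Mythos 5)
Your overall route is the paper's own: self-adjointness and positive semi-definiteness are read off from the adjoint pair of Lemma~\ref{AdvectionPair}, the cross terms $\delta_\bullet^\alpha d_{\bullet'}^\alpha$ vanish because the blocks $\Omega_{E_0},\Omega_{E_+},\Omega_{E_-}$ are pairwise disjoint, and the kernel formula is obtained block by block. The paper declares each of these steps immediate, so your added care (von Neumann's theorem for the unbounded loop part, the Hilbert--Schmidt bound when $E_0=\emptyset$) is a sharpening of the same argument rather than a different one.

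The genuine gap sits exactly at the step you flag as ``fiddly'' and then assert instead of carrying out. Your mechanism for recovering coefficient one on a simple edge --- matching the factor $\tfrac12$ in $d_\pm^\alpha$ against the two oriented copies $U_{o(e)}\times U_{t(e)}\subset\Omega_{E_+}$ and $U_{t(e)}\times U_{o(e)}\subset\Omega_{E_-}$ --- does not work as described. For a fixed $x\in U_{o(e)}$ the slice $\mathset{y\mid (x,y)\in\Omega_{E_+}}$ contains $U_{t(e)}$, but the slice $\mathset{y\mid (x,y)\in\Omega_{E_-}}$ does not, since the block $U_{t(e)}\times U_{o(e)}$ has $x$ in its \emph{second} coordinate. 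Hence each neighbouring disc of $x$ is met by exactly one of the two oriented blocks, and the direct composition $\delta_+^\alpha d_+^\alpha+\delta_-^\alpha d_-^\alpha$ yields $\tfrac12\int_{\mathset{y\mid(x,y)\in\Omega_{E'}}}\frac{f(x)-f(y)}{\absolute{x-y}_p^{2\alpha}}\,dy$, not the integral with coefficient one. Worse, computing instead via $\langle D^\alpha f,g\rangle_V=\langle d^\alpha f,d^\alpha g\rangle_E$ gives coefficient $\tfrac12$ on simple-edge blocks and $2$ on loop blocks, so the two natural readings of $\delta^\alpha d^\alpha$ do not even agree with each other; the normalisations of $d_\pm^\alpha$, of $\delta_0^\alpha$, and the signs in the decomposition $h=h_0+h_+-h_-$ must be tracked consistently before the stated kernel with uniform coefficient can be justified. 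You need to actually perform this bookkeeping (and, if necessary, adjust the normalisation of $d_\pm^\alpha$); as written, the claim that ``each neighbouring disc of $x$ is integrated with total coefficient one'' is unsubstantiated and, with the definitions taken literally, false.
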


\begin{proof}
The self-adjointness and positive semi-definitiveness statements are an immediate consequence of Lemma \ref{AdvectionPair}. The decompositions are seen thus:
\[
D^\alpha =\delta_0^\alpha d_0^\alpha+
\delta_+^\alpha d_+^\alpha +\delta_-^\alpha d_-^\alpha
+\delta_0^\alpha d_+^\alpha
+\delta_0^\alpha d_-^\alpha
+\delta_+^\alpha d_-^\alpha
+
\delta_+ d_0^\alpha
+\delta_-^\alpha d_+^\alpha
+\delta_-^\alpha d_0^\alpha
\]
and all the mixed terms  vanish, as their non-vanishing loci are complementary. This proves the first decomposition, and the second one follows in a similar manner. The  assertion about the integral form of operator $D^\alpha$ is an immediate consequence of the decomposition of $D^\alpha$. The last assertion is also immediate.
\end{proof}

\begin{remark}
Notice that according to Corollary \ref{Laplacians}, the vertex Laplacian is the negative of a Vladimirov-type integral Laplacian operator obtained in the case of a complete  graph
by restricting 
the domain and target of the well-known Vladimirov operator, cf.\ e.g.\ \cite{VVZ1994},
to functions supported on $\Omega_V$.
\end{remark}

The following results are about realising the Betti numbers of a finite graph, viewed as a $1$-dimensional simplicial complex. Consult e.g.\ \cite[Ch. 2.1]{HatcherAT} 
for simplicial homology.

\begin{Lemma}\label{Betti0}
It holds true that
\[
\dim\ker(d^\alpha)=\dim\ker(D^\alpha)=b_0(G)
\]
for $\alpha>0$, where $b_0(G)$ is the zero-th Betti number of $G$.
\end{Lemma}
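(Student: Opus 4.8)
The plan is to prove the two equalities in turn: the equality $\ker(d^\alpha)=\ker(D^\alpha)$ is formal and follows from adjointness, while the identification with $b_0(G)$ is where the geometry of the sets $U_v$ enters. For the first equality I would argue purely from Lemma \ref{AdvectionPair}. Since $D^\alpha=\delta^\alpha d^\alpha$ with $\delta^\alpha$ adjoint to $d^\alpha$, taking $h=d^\alpha f$ gives $\langle D^\alpha f,f\rangle_V=\langle d^\alpha f,d^\alpha f\rangle_E=\norm{d^\alpha f}_E^2$ for $f$ in the common domain. Hence $D^\alpha f=0$ forces $d^\alpha f=0$, while $\ker(d^\alpha)\subseteq\ker(D^\alpha)$ is immediate from $D^\alpha=\delta^\alpha d^\alpha$. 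In the unbounded case (presence of loop-edges) I would run this on the core of locally constant compactly supported functions used in Lemma \ref{AdvectionPair} and extend using self-adjointness of $\delta^\alpha d^\alpha$.

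Next I would characterise $\ker(d^\alpha)$ explicitly. Writing $d^\alpha=d_0^\alpha+d_+^\alpha+d_-^\alpha$ and observing that the three summands are supported on the mutually disjoint sets $\Omega_{E_0},\Omega_{E_+},\Omega_{E_-}$ (the same disjointness that kills the mixed terms in Corollary \ref{Laplacians}), the equation $d^\alpha f=0$ is equivalent to the simultaneous vanishing $d_0^\alpha f=0$ and $d_+^\alpha f=0$, the vanishing of $d_-^\alpha f$ then being automatic. Since $\absolute{x-y}_p^\alpha>0$ off the diagonal, $(d_+^\alpha f)(x,y)=0$ for a.e.\ $(x,y)\in U_{o(e)}\times U_{t(e)}$ forces $f(x)=f(y)$ a.e., which by Fubini means $f$ is a.e.\ equal to one and the same constant on $U_{o(e)}$ and on $U_{t(e)}$; likewise $d_0^\alpha f=0$ forces $f$ a.e.\ constant on $U_{o(e)}$ for a loop-edge $e\in E_0$. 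Thus $f\in\ker(d^\alpha)$ iff $f$ is a.e.\ constant on each $U_v$ and these constants agree across every edge of $G$.

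Finally I would read off the dimension. The previous step identifies $\ker(d^\alpha)$, via $f\mapsto(\text{common value on }U_v)_{v\in V}$, with the space of $\mathds{C}$-valued functions on $V$ that are constant on each connected component of $G$ (a loop-edge not affecting connectivity). This map is a linear isomorphism onto $H^0(G;\mathds{C})$, whose dimension equals the number of connected components, i.e.\ $b_0(G)$. This is the $p$-adic incarnation of the classical statement that the kernel of the coboundary on $0$-cochains is spanned by the indicator functions of the components.

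The main obstacle is the second step: converting the measure-theoretic statement ``$f(x)=f(y)$ a.e.'' into honest constancy on each $U_v$ with a single well-defined value attached to each vertex, and verifying that the kernel contains no further $L^2$-solutions. Here the hypothesis \eqref{setCondition}, which makes $\absolute{x-y}_p$ constant on each product $U_v\times U_w$, is what guarantees that these constant functions genuinely lie in the domain and that nothing else survives. A secondary point to handle is that every vertex must actually receive a constancy constraint; this is automatic when every vertex is incident to an edge (in particular when each carries a loop-edge), whereas a bare isolated vertex would leave $f|_{U_v}$ unconstrained and break the count, so I would either exclude such vertices or treat them separately.
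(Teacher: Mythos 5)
Your proposal is correct and follows essentially the same route as the paper: the first equality via adjointness (your computation $\langle D^\alpha f,f\rangle_V=\norm{d^\alpha f}_E^2$ just makes explicit what the paper asserts in one line), and the second by showing $d^\alpha f=0$ forces $f$ to be (a.e.) constant on each $U_v$ and to take equal values across each edge, hence constant on connected components. Your caveat about isolated vertices is a genuine edge case the paper's argument silently presupposes away (its step ``otherwise there is an edge $(v,w)$\dots'' needs $v$ to be incident to some edge), so flagging it is a refinement rather than a defect.
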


\begin{proof}
The operator $d^\alpha$ clearly vanishes on all functions $f\colon \Omega_V\to\mathds{C}$ constant on the compact open sets $\Omega_C\subseteq\Omega_V$ defined by the vertices of the
connected components $C$ of $G$. 
Now assume that 
 $f\in\ker d^\alpha$.
 First, observe that $f$ is constant on each compact open $U_v$ with $v\in V$. Otherwise, there is an edge $(v,w)$ such that $f$ restricted to $U_w$ misses one of the different values on $U_v$.
 Hence, $f$ takes the same value on sets
 $U_w$ where $w$ is connected to $v$ by an edge in $E$. It now follows that $f$ is constant on $\Omega_C$, where $C$ is a connected component of $G$.
This proves that
\[
\dim\ker d^\alpha=b_0(G)
\]
for $\alpha>0$. The first equality is a consequence of the fact that $\delta^\alpha$ and $d^\alpha$ are each other's adjoint operator.
\end{proof}

The $p$-adic edge Laplacian $\Delta^\alpha$ does not have a finite-dimensional kernel, as can be seen thus:
\begin{align*}
(\delta^\alpha h)(x)&=
\int_{\Omega_V}h(x,y)\absolute{x-y}_p^{-\alpha}\,dy
\\
&=\int_{\mathset{y\in \Omega_V\colon (x,y)\in \Omega_E}}
h(x,y)\absolute{x-y}_p^{-\alpha}\,dy
\\
&=\sum\limits_{w\sim v(x)}
\int_{U_w}h(x,y)\absolute{x-y}_p^{-\alpha}\,dy
\\
&=\sum\limits_{w\sim v(x)}
\dist(x,U_w)^{-\alpha}\int_{U_w}h(x,y)\,dy
\end{align*}
where $v\sim w$ means that vertices $v,w\in V$ are connected by an edge, and $v(x)$ means the unique vertex $v\in V$ for which $x\in U_x$.
The space of functions $h\in L^2(\Omega_E)_\sigma$ such that
each integral in the last sum vanishes, is infinite-dimensional.
Define
\[
L^2(\Omega_E)_\rho
=\mathset{h\in L^2(\Omega_E)_\sigma\mid 
\forall (v,w)\in E\colon
h|_{U_v\times U_{w}}
=\text{const.}}
\]
which is a finite-dimensional vector space.

\begin{Lemma}\label{dimensionRadialEdge}
It holds true that
\[
\dim L^2(\Omega_E)_\rho=\absolute{E_+}
\]
for any finite graph $G$ being simple after removing the loop-edges.
\end{Lemma}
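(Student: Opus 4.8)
The plan is to identify $L^2(\Omega_E)_\rho$ explicitly as a space of tuples of constants indexed by the oriented edges of $G$, and then to count how many of these constants remain free after imposing the antisymmetry relation. First I would observe that, by definition, every $h\in L^2(\Omega_E)_\rho$ is constant on each product block attached to an oriented edge. The blocks $U_{o(e)}\times U_{o(e)}$ for $e\in E_0$, the blocks $U_{o(e)}\times U_{t(e)}$ for $e\in E_+$, and the blocks $U_{t(e)}\times U_{o(e)}$ for $e\in E_+$ are pairwise disjoint and cover $\Omega_E$; here the hypothesis that $G$ is simple after removing its loops guarantees that at most one edge joins two distinct vertices, so these blocks are genuinely distinct and do not overlap. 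Consequently such an $h$ is uniquely encoded by the family of its constant values on these blocks, giving a priori a coordinate space of dimension $\absolute{E_0}+2\absolute{E_+}$, one coordinate per block.

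Next I would impose the antisymmetry relation $h(x,y)=-h(y,x)$ coming from membership in $L^2(\Omega_E)_\sigma$ and read off its effect block by block, using that the flip $(x,y)\mapsto(y,x)$ carries each loop block $U_v\times U_v$ to itself and interchanges the two orientation blocks $U_v\times U_w$ and $U_w\times U_v$ of each simple edge. On a loop block the constant value $c$ must therefore satisfy $c=-c$, whence $c=0$, so the $\absolute{E_0}$ loop coordinates are forced to vanish. For a simple edge $e\in E_+$ with $o(e)=v$ and $t(e)=w$, antisymmetry sends the constant on $U_v\times U_w$ to minus the constant on $U_w\times U_v$, so the value on the negatively oriented block is determined by that on the positively oriented one. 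Thus the $\absolute{E_-}=\absolute{E_+}$ negative coordinates are likewise not free, while no relation links distinct simple edges, their blocks being disjoint and unrelated under the flip.

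Finally I would assemble these observations into an explicit linear isomorphism: the map sending $h$ to the tuple $(c_e)_{e\in E_+}$, where $c_e$ is the constant value of $h$ on $U_{o(e)}\times U_{t(e)}$, is linear, injective (an element is recovered from the $c_e$ together with the vanishing on loop blocks and the sign rule on negative blocks), and surjective (any prescribed tuple extends to a well-defined antisymmetric step function in $L^2(\Omega_E)_\rho$). Hence $\dim L^2(\Omega_E)_\rho=\absolute{E_+}$. I do not anticipate a genuine obstacle here; the only point requiring care is the bookkeeping, namely avoiding a double count of each simple edge (whose two orientations together contribute a single free parameter) and correctly discarding the loop blocks (which contribute none), both of which are dictated by the antisymmetry condition.
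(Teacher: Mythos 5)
Your proposal is correct and is essentially the argument the paper gives, only written out in full: the paper's proof likewise notes that antisymmetry forces $h|_{U_v\times U_v}=0$ on loop blocks and reduces the count to one free constant per simple edge. No issues.
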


\begin{proof}
In the case that $G$ is itself a simple graph, this is immediate.
Otherwise, observe that a function $h\in L^2(\Omega_E)_\rho$ satisfies
\[
h|_{U_v\times U_v}=0
\]
for all $v\in V$. This now implies the assertion also in the more general case.
\end{proof}

The orthogonal complement of $L^2(\Omega_E)_\rho$ in $L^2(\Omega_E)_\sigma$ is
\[
L^2(\Omega_E)_a
=\mathset{h\in L^2(\Omega_E)_\sigma\mid
\forall(v,w)\in E\colon
\int_{U_v\times U_w}h(x,y)\,dx\,dy=0}
\]
and the following holds true:
\begin{Lemma}
It holds true that
\[
L^2(\Omega_E)_a\subseteq\ker\delta^\alpha
\]
for $\alpha>0$.
\end{Lemma}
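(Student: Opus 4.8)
The plan is to deduce the inclusion from the adjointness of Lemma~\ref{AdvectionPair} together with the explicit form of $\delta^\alpha$. Since $\delta^\alpha=(d^\alpha)^*$, one has the standard identity $\ker\delta^\alpha=(\image d^\alpha)^\perp$; as $L^2(\Omega_E)_a$ is by definition the orthogonal complement of $L^2(\Omega_E)_\rho$ inside $L^2(\Omega_E)_\sigma$, the assertion $L^2(\Omega_E)_a\subseteq\ker\delta^\alpha$ is equivalent to $\overline{\image d^\alpha}\subseteq L^2(\Omega_E)_\rho$. Thus the first move is to fix $h\in L^2(\Omega_E)_a$ and show that $\langle d^\alpha f,h\rangle_E=0$ for every $f\in L^2(\Omega_V)$; by Lemma~\ref{AdvectionPair} this equals $\langle f,\delta^\alpha h\rangle_V$, so its vanishing for all $f$ forces $\delta^\alpha h=0$.

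To compute $\langle d^\alpha f,h\rangle_E$ I would split the domain $\Omega_E$ into its loop, positive and negative blocks and invoke condition~\eqref{setCondition} to replace $\absolute{x-y}_p^{-\alpha}$ by the constant $\dist(U_v,U_w)^{-\alpha}$ on each simple-edge product $U_v\times U_w$. Separating the two summands of $f(x)-f(y)$, the positive-edge contribution then reduces to a linear combination of the partial integrals $x\mapsto\int_{U_w}h(x,y)\,dy$ (paired against $\overline{f}$ on $U_v$) and $y\mapsto\int_{U_v}h(x,y)\,dx$ (paired against $\overline{f}$ on $U_w$); the negative-edge block is handled identically after using the antisymmetry of $h$, and the loop block is treated by the same decomposition into positive and negative locally constant pieces employed in the proof of Lemma~\ref{AdvectionPair}.

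The crux, and the step I expect to be the main obstacle, is to show that these partial integrals vanish. The defining property of $L^2(\Omega_E)_a$ delivers the vanishing of the full double integrals $\int_{U_v\times U_w}h\,dx\,dy$, so the heart of the matter is to upgrade this mean-zero condition to the pointwise vanishing of $\int_{U_w}h(x,y)\,dy$ for almost every $x\in U_v$. I would attempt this by testing $h$ against a family inside $L^2(\Omega_E)_\rho$ rich enough to separate the partial integrals from their averages, verifying whether orthogonality to all of $L^2(\Omega_E)_\rho$ — and not merely to the global indicator of $U_v\times U_w$ — already pins the partial integrals down to zero, or whether the radial subspace must be taken finer for this to hold. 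Once that is secured, every summand in the explicit formula for $\delta^\alpha h$ computed just before the lemma collapses, giving $\delta^\alpha h=0$ in $L^2(\Omega_V)$ and, as a by-product, exhibiting $\ker\delta^\alpha$ as infinite-dimensional.
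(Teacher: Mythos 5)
Your adjointness reduction is sound as far as it goes, and you have put your finger on exactly the right spot — but the step you defer is not a missing trick, it is a genuine obstruction. Orthogonality to $L^2(\Omega_E)_\rho$ is, by the very computation you describe, \emph{exactly} the vanishing of the block averages $\int_{U_v\times U_w}h\,dx\,dy$ and nothing more: the radial functions are constant on each block $U_v\times U_w$, so they only see those averages and cannot separate the partial integrals $x\mapsto\int_{U_w}h(x,y)\,dy$ from their mean. Equivalently, in your reformulation the required containment $\image d^\alpha\subseteq L^2(\Omega_E)_\rho$ fails: for $f$ non-constant on some $U_v$, the function $(d^\alpha f)(x,y)$ is not constant on $U_v\times U_w$ even though $\absolute{x-y}_p$ is, by condition \eqref{setCondition}. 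Concretely, take a single simple edge $(v,w)$, pick a nonzero $g\in L^2(U_v)$ with $\int_{U_v}g(x)\,dx=0$, and set $h(x,y)=g(x)$ on $U_v\times U_w$, extended antisymmetrically to $U_w\times U_v$. Then $h\in L^2(\Omega_E)_a$, yet
\[
(\delta^\alpha h)(x)=\dist(U_v,U_w)^{-\alpha}\,g(x)\int_{U_w}dy
\]
is a nonzero element of $L^2(\Omega_V)$. So the mean-zero condition genuinely cannot be upgraded to pointwise vanishing of the partial integrals, and the crucial step of your plan is not only unproved but unprovable.

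What this shows is that the inclusion holds, and is indeed immediate (which is all the paper's proof says), only for the smaller space singled out in the paragraph preceding the definition of $L^2(\Omega_E)_\rho$ — namely the $h\in L^2(\Omega_E)_\sigma$ for which \emph{each} partial integral $\int_{U_w}h(x,y)\,dy$ vanishes for almost every $x$; for such $h$ every summand in the displayed formula for $(\delta^\alpha h)(x)$ is zero term by term. That is evidently the condition the one-line proof has in mind, and your write-up becomes correct if you work with that stronger condition in place of the orthogonal-complement characterization of $L^2(\Omega_E)_a$. As it stands, your honest flagging of the obstacle is the right instinct: the answer to the question you pose at the end of your third paragraph is negative, so the proposal does not yield a proof of the statement as literally formulated.
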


\begin{proof}
This is immediate.
\end{proof}

Define
\[
\delta_\rho^\alpha\colon L^2(\Omega_E)_\rho\to L^2(\Omega_V)
\]
as the restriction of
$\delta^\alpha$ to $L^2(\Omega_E)_\rho$. Due to the last result, it fits into 
the commutative diagram:
\[
\xymatrix{
L^2(\Omega_E)_\sigma\ar[dr]^{\delta^\alpha}\ar[d]_{\kappa}&\\
L^2(\Omega_E)_\rho\ar[r]_{\delta_\rho^\alpha}&L^2(\Omega_V)
}
\]
where $\kappa$ is the canonical orthogonal projection.
There is now the following operator:
\[
\Delta_\rho^\alpha=d^\alpha\delta_\rho^\alpha
\colon L^2(\Omega_E)_\rho\to L^2(\Omega_E)_\rho
\]
for $\alpha>0$, which clearly is well-defined, cf.\ the proof of Lemma \ref{dimensionRadialEdge}.
\newline

Define $G'=(V,E')$ to be the simple graph obtained from $G$ by removing all loop-edges.

\begin{Lemma}\label{Betti1}
It holds true that
\[
\ker\delta_\rho^\alpha
=\ker \Delta_\rho^\alpha
=b_1(G')
\]
 for $\alpha>0$, where $b_1(G')$ is the first Betti number of $G'$.
\end{Lemma}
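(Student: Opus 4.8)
The plan is to read the claimed identities as statements about dimensions, i.e.\ $\dim\ker\delta_\rho^\alpha=\dim\ker\Delta_\rho^\alpha=b_1(G')$, and to split the argument into two independent parts: first to identify $\dim\ker\delta_\rho^\alpha$ with the dimension of the space $Z_1(G')$ of simplicial $1$-cycles of $G'$, and then to show $\ker\Delta_\rho^\alpha=\ker\delta_\rho^\alpha$. Throughout I would fix coordinates on the finite-dimensional spaces involved. By Lemma \ref{dimensionRadialEdge} and its proof, an $h\in L^2(\Omega_E)_\rho$ is determined by the antisymmetric array $(c_{vw})$ of its constant values $c_{vw}=h|_{U_v\times U_w}$ (with $c_{vw}=-c_{wv}$ and $c_{vv}=0$), so that $L^2(\Omega_E)_\rho\cong\mathds{C}^{E'}$ as the space of simplicial $1$-chains of $G'$. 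Likewise I let $P\subset L^2(\Omega_V)$ be the subspace of functions constant on each $U_v$, identified with $\mathds{C}^{V}$. Write $m_v=\int_{U_v}dy$ for the Haar measure of $U_v$ and $W_{vw}=\dist(U_v,U_w)^{-\alpha}$; here $m_v>0$ for all $v$ and $W_{vw}>0$ for $v\sim w$.

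First I would make $\delta_\rho^\alpha$ explicit. Using condition \eqref{setCondition}, for $x\in U_v$ the distance $\absolute{x-y}_p=\dist(U_v,U_w)$ is constant in $y\in U_w$, so the computation preceding the definition of $L^2(\Omega_E)_\rho$ gives
\[
(\delta_\rho^\alpha h)(x)=\sum_{w\sim v}W_{vw}\,m_w\,c_{vw},\qquad x\in U_v,
\]
which depends only on $v$; hence $\delta_\rho^\alpha$ maps into $P$. Introducing the rescaled antisymmetric array $b_{vw}=m_vm_wW_{vw}\,c_{vw}$, related to $(c_{vw})$ by an invertible diagonal edge-scaling $D_E$ with positive entries $m_{o(e)}m_{t(e)}W_e$, the vanishing condition $(\delta_\rho^\alpha h)|_{U_v}=0$ becomes $\sum_{w\sim v}b_{vw}=0$ for every $v$, which is exactly the conservation law defining a simplicial $1$-cycle. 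Thus $\delta_\rho^\alpha=\Lambda_V^{-1}\,\partial\,D_E$, where $\Lambda_V=\diag(m_v)$ and $\partial$ is the unweighted simplicial boundary operator of $G'$. Since $\Lambda_V$ and $D_E$ are invertible, $\ker\delta_\rho^\alpha=D_E^{-1}\ker\partial=D_E^{-1}Z_1(G')$, whence $\dim\ker\delta_\rho^\alpha=\dim Z_1(G')=b_1(G')$, using that $G'$ carries no $2$-simplices so that $H_1(G')=Z_1(G')$.

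For the remaining equality I would identify $\Delta_\rho^\alpha$ as a composition of mutually adjoint maps. For $f\in P$ the difference $f(x)-f(y)$ and the distance $\absolute{x-y}_p$ depend only on the pair $(v,w)$ with $x\in U_v$, $y\in U_w$, so $d^\alpha f$ is constant on each $U_v\times U_w$ and $d^\alpha$ restricts to a map $d^\alpha|_P\colon P\to L^2(\Omega_E)_\rho$. Restricting the adjunction of Lemma \ref{AdvectionPair} to $f\in P$ and $h\in L^2(\Omega_E)_\rho$ gives $\langle d^\alpha|_P f,h\rangle_E=\langle f,\delta_\rho^\alpha h\rangle_V$, so that $d^\alpha|_P=(\delta_\rho^\alpha)^\ast$ between these two finite-dimensional Hilbert spaces. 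Since $\delta_\rho^\alpha$ already lands in $P$, this yields $\Delta_\rho^\alpha=d^\alpha\delta_\rho^\alpha=(\delta_\rho^\alpha)^\ast\delta_\rho^\alpha$, and the standard fact $\ker(A^\ast A)=\ker A$ then forces $\ker\Delta_\rho^\alpha=\ker\delta_\rho^\alpha$. Together with the previous paragraph this shows all three quantities agree.

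The main obstacle I anticipate lies in the bookkeeping of the first part: one must verify that the asymmetric measure weights $m_w$ appearing in $\delta_\rho^\alpha$ can be absorbed into a genuinely invertible edge-scaling $D_E$ compatible with antisymmetry, and that after the vertex-rescaling by $\Lambda_V^{-1}$ the remaining operator is precisely the simplicial boundary map of $G'$ with the correct orientation signs. It is this rescaling, and not any analytic input, that converts the weighted Kirchhoff balance into the topological cycle condition; the reliance on condition \eqref{setCondition} to make $\absolute{x-y}_p$ constant on each product $U_v\times U_w$ is exactly what reduces the whole statement to finite-dimensional linear algebra.
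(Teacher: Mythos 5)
Your proof is correct and takes essentially the same route as the paper: the paper likewise identifies $\delta_\rho^\alpha$ as the adjoint of a weighted coboundary map $\mathds{C}^V\to\mathds{C}^{E'}$, deduces $\ker\Delta_\rho^\alpha=\ker\delta_\rho^\alpha$ from that adjunction, and attributes the count $\dim\ker\delta_\rho^\alpha=b_1(G')$ to standard graph theory. You have merely made explicit the diagonal rescalings and the $\ker(A^\ast A)=\ker A$ step that the paper leaves implicit.
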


\begin{proof}
In the proof of Lemma \ref{Betti0}, it was shown that the kernel of $d^\alpha$ coincides with the kernel of the co-boundary map $d_G^\alpha\colon \mathds{C}^V\to\mathds{C}^{E}$
of a suitably weighted version of the graph $G$ (using positive powers of $p$-adic distances between the corresponding vertex discs as weights).
The image of the co-boundary map in fact lies in $\mathds{C}^{E'}$,  and $\delta_\rho^\alpha$ is clearly the adjoint of that map. This implies the first equality. The second now follows from graph theory. 
\end{proof}

This last Lemma allows to define the index of $d^\alpha$:
\[
\ind(d^\alpha):=\dim\ker d^\alpha-\dim\ker\delta_\rho^\alpha
\]
which is a natural number for $\alpha>0$.
Using the Euler characterstic $\chi(G')$, defined as
\[
\chi(G')=\absolute{V}-\absolute{E_+}
\]
of the simple graph $G'$, obtain:

\begin{Corollary}\label{ind=chi}
It holds true that
\[
\ind(d^\alpha)=\dim\ker D^\alpha-\dim\ker\Delta_\rho^\alpha
=\chi(G')
\]
for $\alpha>0$.
\end{Corollary}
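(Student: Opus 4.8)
The plan is to read off both dimensions appearing in the definition of $\ind(d^\alpha)$ from the two preceding Betti-number computations and then to invoke the Euler--Poincar\'e formula for a one-dimensional simplicial complex. Concretely, I would start from the definition $\ind(d^\alpha)=\dim\ker d^\alpha-\dim\ker\delta_\rho^\alpha$ and substitute $\dim\ker d^\alpha=\dim\ker D^\alpha=b_0(G)$ from Lemma \ref{Betti0} together with $\dim\ker\delta_\rho^\alpha=\dim\ker\Delta_\rho^\alpha=b_1(G')$ from Lemma \ref{Betti1}. This substitution simultaneously establishes the first asserted equality $\ind(d^\alpha)=\dim\ker D^\alpha-\dim\ker\Delta_\rho^\alpha$ and rewrites the index as $b_0(G)-b_1(G')$, so that only the identity $b_0(G)-b_1(G')=\chi(G')$ remains to be checked.

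The next step is to pass from $G$ to $G'$ in the zeroth Betti number. Since a loop-edge joins a vertex to itself, it plays no role in the connectivity of the graph; hence $G$ and $G'=(V,E')$ have exactly the same connected components, and therefore $b_0(G)=b_0(G')$. This lets me replace $b_0(G)$ by $b_0(G')$, so that the index is expressed purely in terms of invariants of the simple graph $G'$, namely $\ind(d^\alpha)=b_0(G')-b_1(G')$.

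Finally I would apply the classical Euler--Poincar\'e identity to $G'$, viewed as a one-dimensional simplicial complex: its alternating sum of Betti numbers equals its alternating sum of face counts, that is $b_0(G')-b_1(G')=\absolute{V}-\absolute{E_+}$, where $\absolute{E_+}$ is the number of (undirected) simple edges of $G'$. Since the right-hand side is precisely the definition of $\chi(G')$ given above, combining this with the previous two steps yields $\ind(d^\alpha)=\chi(G')$ for all $\alpha>0$, as claimed. The computation is essentially a bookkeeping assembly of the earlier lemmas, so I do not expect a genuine obstacle; the only point deserving care is the harmlessness of the loop-edges, i.e.\ verifying that they are invisible both to $b_0$ (step two) and to $\chi(G')$ as defined, so that comparing the index and the Euler characteristic over the \emph{same} graph $G'$ is legitimate.
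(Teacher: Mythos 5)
Your proof is correct and takes essentially the same route as the paper's: both combine Lemmas \ref{Betti0} and \ref{Betti1} with the graph-theoretic identity $\chi(G')=b_0(G')-b_1(G')$ and the observation that $b_0(G)=b_0(G')$ since loop-edges do not affect connectivity. Your version merely spells out the bookkeeping in more detail than the paper does.
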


\begin{proof}
The first equality follows immediately from Lemmas \ref{Betti0} and \ref{Betti1}, using 
the equality
\[
\chi(G')=b_0(G')-b_1(G')
\]
from graph theory, cf.\ e.g.\ \cite[Thm.\ 2.44]{HatcherAT}, and the fact that $b_0(G)=b_0(G')$ holds true.
\end{proof}

\section{Heat kernels}

W.\ Z\'u\~niga-Galindo showed that his $p$-adic graph operators are bounded self-adjoint linear Laplacian operators defining a strong Markov process whose transition function is given for each $x\in\Omega_V$
by a probability measure $p_t(x,\cdot)$ on the Borel
$\sigma$-algebra of $\Omega_V$, and this proves the uniqueness and existence of a solution to the Cauchy problem for the heat equation attached to the operator $D^\alpha$
\cite[Thm.\ 4.2]{ZunigaNetworks}.
Later on, it will be shown that the probability measure $p_t(x,\cdot)$ has a well-defined distribution function, if every vertex of $G$  has a loop-edge attached to it.
\newline

As for the edge Laplacian $\Delta_\rho^\alpha$, which is a finite-dimensional linear operator, the following holds true:

\begin{Lemma}
The heat kernel for $\Delta^\alpha_\rho$ exists and has the form
\[
p_{\Delta^\alpha_\rho,t}(x,y)
=\sum\limits_{\Spec(\Delta_\rho^\alpha)}
e^{-\lambda t}
\phi_\lambda(x)\phi_\lambda(y)
\]
for $\alpha>0$.
\end{Lemma}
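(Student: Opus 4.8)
The plan is to reduce everything to the finite-dimensional spectral theorem, since $L^2(\Omega_E)_\rho$ is finite-dimensional of dimension $\absolute{E_+}$ by Lemma \ref{dimensionRadialEdge}. First I would check that $\Delta_\rho^\alpha$ is a self-adjoint, positive semi-definite operator on this space. For $g,h\in L^2(\Omega_E)_\rho$ one has $\delta^\alpha h=\delta_\rho^\alpha h$ by the definition of $\delta_\rho^\alpha$ as the restriction of $\delta^\alpha$, so applying the adjointness relation of Lemma \ref{AdvectionPair} with $f=\delta_\rho^\alpha g$ gives
\[
\langle\Delta_\rho^\alpha g,h\rangle_E
=\langle d^\alpha\delta_\rho^\alpha g,h\rangle_E
=\langle\delta_\rho^\alpha g,\delta^\alpha h\rangle_V
=\langle\delta_\rho^\alpha g,\delta_\rho^\alpha h\rangle_V .
\]
The right-hand side is Hermitian-symmetric in $g$ and $h$, which yields self-adjointness, and setting $g=h$ shows $\langle\Delta_\rho^\alpha h,h\rangle_E=\norm{\delta_\rho^\alpha h}_V^2\ge 0$, hence positive semi-definiteness.

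Next I would invoke the spectral theorem for a self-adjoint operator on a finite-dimensional inner product space: there is an orthonormal basis $\mathset{\phi_\lambda}$ of $L^2(\Omega_E)_\rho$ consisting of eigenfunctions, with real eigenvalues $\lambda\in\Spec(\Delta_\rho^\alpha)$, all non-negative by the previous step. Because the weight $\absolute{x-y}_p^{-\alpha}$ is real-valued, the matrix of $\Delta_\rho^\alpha$ in the natural real basis of antisymmetrised indicator functions of the products $U_v\times U_w$ is real symmetric, so the $\phi_\lambda$ may be taken real-valued; this is what removes the complex conjugation from the asserted formula and also lets one write a single eigenfunction per summand, the sum over $\Spec(\Delta_\rho^\alpha)$ being understood as a sum over an orthonormal eigenbasis.

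Finally I would construct the heat semigroup by functional calculus. In finite dimensions $e^{-t\Delta_\rho^\alpha}$ is simply the matrix exponential, so it exists for every $t\ge 0$ and provides the unique solution $u(t)=e^{-t\Delta_\rho^\alpha}u_0$ of the Cauchy problem $\partial_t u+\Delta_\rho^\alpha u=0$, $u(0)=u_0$; this is what ``the heat kernel exists'' means here. Writing $P_\lambda$ for the orthogonal projection onto the $\lambda$-eigenspace, the spectral decomposition gives $e^{-t\Delta_\rho^\alpha}=\sum_\lambda e^{-\lambda t}P_\lambda$. Each $P_\lambda$ is an integral operator whose kernel is $\sum\phi_\lambda(x)\phi_\lambda(y)$ summed over the orthonormal eigenfunctions spanning that eigenspace, so summing over $\Spec(\Delta_\rho^\alpha)$ produces exactly the claimed kernel. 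I do not expect a genuine obstacle, as the finite-dimensionality trivialises both existence and convergence; the only points requiring care are the self-adjointness of the \emph{derived} operator $\Delta_\rho^\alpha$ rather than of $\Delta^\alpha$ itself, and the reality of the eigenfunctions, which together justify the symmetric, conjugation-free form of the kernel.
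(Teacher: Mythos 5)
Your proof is correct and is precisely the standard finite-dimensional spectral argument that the paper invokes in one line by citing it as well known from the theory of finite graphs (Chung's book); you simply supply the details — self-adjointness and positive semi-definiteness of $\Delta_\rho^\alpha$ via Lemma \ref{AdvectionPair}, the spectral theorem on the finite-dimensional space $L^2(\Omega_E)_\rho$, and the matrix exponential. No gap; your version just makes explicit the points (self-adjointness of the restricted operator, reality of the eigenbasis) that the citation leaves implicit.
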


\begin{proof}
This is well-known from the theory of finite graphs, cf.\ e.g.\ \cite{Chung1996}. 
\end{proof}

\begin{Lemma}\label{compactnessCriterion}
In the case that every vertex of $G$ has a loop-edge, the operator $D^\alpha$ is compact. In all other cases, $D^\alpha$ has an eigenvalue with infinite multiplicity.
\end{Lemma}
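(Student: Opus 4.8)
From Corollary 3.6, the vertex Laplacian acts as
$$
D^\alpha f(x) = \int_{\{y \in \Omega_V \mid (x,y) \in \Omega_E\}} \frac{f(x)-f(y)}{|x-y|^{2\alpha}}\,dy.
$$
Let me expand this. The integration domain consists of those $y$ lying in vertex discs $U_w$ adjacent to the vertex $v(x)$ containing $x$. Crucially, whether the loop-edge at $v(x)$ is present determines whether $U_{v(x)}$ itself is included in the integration domain.

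Let me split $D^\alpha f(x)$ into a multiplication part and an integral part:
$$
D^\alpha f(x) = \left(\int_{\Omega_x} \frac{dy}{|x-y|^{2\alpha}}\right) f(x) - \int_{\Omega_x} \frac{f(y)}{|x-y|^{2\alpha}}\,dy,
$$
where $\Omega_x = \{y \in \Omega_V \mid (x,y)\in\Omega_E\}$.

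I want to analyze these two pieces and determine compactness versus infinite-multiplicity eigenvalues depending on the loop-edge structure.

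---

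The plan is to decompose $D^\alpha$ as a multiplication operator plus an integral (Hilbert–Schmidt) operator, and to show that the multiplication part is the decisive factor. First I would write, for $x \in U_v$, the coefficient
$$
a_v(x) = \int_{\Omega_x}\frac{dy}{|x-y|^{2\alpha}} = \sum_{w \sim v}\int_{U_w}\frac{dy}{|x-y|^{2\alpha}}
$$
and the kernel $K(x,y) = |x-y|^{-2\alpha}\,\mathds{1}_{\Omega_E}(x,y)$, so that $D^\alpha f(x) = a_{v(x)}(x)f(x) - \int_{\Omega_V}K(x,y)f(y)\,dy$. The key observation, using condition \eqref{setCondition}, is that whenever $w \neq v$ the distance $|x-y|_p$ is constant on $U_v \times U_w$; hence each term with $w \neq v$ in $a_v(x)$ is a \emph{constant} (independent of $x\in U_v$), and the corresponding part of $K$ is a locally constant kernel of finite rank. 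The only contribution that can depend on $x$ within $U_v$, and the only part of $K$ that lives near the diagonal, comes from the loop-edge term $w = v$ (present exactly when $v$ carries a loop-edge).

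Next I would treat the two cases. \textbf{Case 1: every vertex has a loop-edge.} Here I claim the relevant operator is, up to a finite-rank locally constant perturbation, a direct sum over $v$ of Vladimirov-type operators $f \mapsto \int_{U_v}\frac{f(x)-f(y)}{|x-y|^{2\alpha}}\,dy$ on each $L^2(U_v)$. Such Taibleson–Vladimirov operators on a compact disc are known to have pure point spectrum with eigenvalues $\lambda_n \to \infty$ of finite multiplicity, diagonalised by Kozyrev wavelets (cf.\ \cite{Kozyrev2002,VVZ1994}); equivalently their heat semigroups are compact. Since compactness is preserved under finite-rank perturbation, $D^\alpha$ is compact. \textbf{Case 2: some vertex $v_0$ has no loop-edge.} Then for $x \in U_{v_0}$ the coefficient $a_{v_0}(x)$ reduces to the finite sum $\sum_{w \sim v_0}\dist(U_{v_0},U_w)^{-2\alpha}\Vol(U_w)$, which is a single constant $c_0$ independent of $x$, and the integral kernel restricted to $U_{v_0}\times\Omega_V$ is locally constant in the $x$-variable. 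Consequently $D^\alpha$ acts on the infinite-dimensional subspace
$$
H_{v_0}=\Bigl\{f \in L^2(U_{v_0}) \;\Big|\; \textstyle\int_{U_{v_0}}f\,dx = 0,\ f \equiv 0 \text{ off } U_{v_0}\Bigr\}
$$
as multiplication by the scalar $c_0$: for such mean-zero $f$ supported on $U_{v_0}$, the off-diagonal integrals $\int_{U_w}|x-y|^{-2\alpha}f(y)\,dy$ vanish because $|x-y|^{-2\alpha}$ is constant on each $U_w$, so $D^\alpha f = c_0 f$. Hence $c_0$ is an eigenvalue of infinite multiplicity, and $D^\alpha$ is not compact.

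The main obstacle I anticipate is the fine spectral analysis in Case 1, namely verifying that the genuinely diagonal single-disc operator $f\mapsto\int_{U_v}(f(x)-f(y))|x-y|^{-2\alpha}\,dy$ is compact rather than merely bounded. This requires invoking the explicit Kozyrev-wavelet diagonalisation, checking that the eigenvalues grow (so that the operator is a norm limit of finite-rank truncations), and confirming that the cross-terms coupling distinct discs are locally constant and therefore finite-rank. I would also need to be careful that the \emph{symmetric} (antisymmetrised) formulation on $L^2(\Omega_E)_\sigma$ feeding into $D^\alpha = \delta^\alpha d^\alpha$ reproduces exactly this single-disc operator on the diagonal; this is where condition \eqref{setCondition} does the essential work, collapsing all off-diagonal distances to constants and thereby confining the only analytically nontrivial behaviour to the loop-edge (diagonal) blocks.
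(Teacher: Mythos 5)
Your argument is correct and follows essentially the same route as the paper's: condition \eqref{setCondition} forces all off-diagonal kernel values to be constant, so the only scale-dependent contribution to a Kozyrev-wavelet eigenvalue comes from the loop-edge (diagonal Vladimirov) block --- without it the eigenvalue is independent of the wavelet's support and hence has infinite multiplicity (your mean-zero eigenspace $H_{v_0}$ is exactly the closed span of those wavelets), while with it the eigenvalues separate by scale and each has finite multiplicity, matching the paper's explicit computation of $\lambda_B$ via \cite{Kozyrev2004}. The one caveat, which your write-up shares with the paper's own proof, is that the concluding step should not literally read ``hence $D^\alpha$ is compact'': when loop-edges are present the operator is unbounded (cf.\ Corollary \ref{Laplacians}) and your perturbation also contains a non-finite-rank multiplication operator, so what is actually established --- and all that the subsequent trace argument needs --- is discreteness of the spectrum with finite multiplicities (equivalently, compactness of the resolvent or of the heat semigroup) rather than compactness of $D^\alpha$ itself.
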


\begin{proof}
In
\cite[Lem.\ 5]{brad_heatMumf}, it was observed that in the case of a simple graph, the operator $D^\alpha$ is a Z\'u\~niga operator. These are shown to have have 
Kozyrev wavelets as eigenfunctions \cite[Thm.\ 10.1]{ZunigaNetworks}. 
The  eigenvalues corresponding to a Kozyrev wavelet have infinite multiplicity for the following reason: let $\psi_B$ be a Kozyrev eigenfunction supported on $B\subset\Omega_V$.
Then
\[
D^\alpha\psi_B(x)
=\int_{\mathset{y\in\Omega_V\mid (x,y)\in \Omega_E}}\absolute{x-y}_p^{-2\alpha}\,dy\,\psi_B(x)
\]
where 
\[
\lambda_B=\int_{\mathset{y\in\Omega_V\mid(x,y)\in \Omega_E}}\absolute{x-y}_p^{-2\alpha}\,dy
\]
is a well-defined positive number, and this is an eigenvalue of $D^\alpha$. If the support $C$ of another Kozyrev wavelet $\psi_C$ is contained in $B$, then the same computation yields
\[
D^\alpha\psi_C(x)=\int_{\mathset{y\in\Omega_V\mid(x,y)\in\Omega_E}}
\absolute{x-y}_p^{-2\alpha}\,dy\,\psi_C(x)
\]
and $\lambda_C=\lambda_B$ holds true. Since the disc $C$ can be arbitrarily small, the mutliplicity of $\lambda=\lambda_B$ is infinite. Notice that this argument makes use only of the fact that $(x,y)\in\Omega_E$ implies that $y\notin U_{v(x)}$. 

\smallskip
Now assume that  vertex $v(x)$ has a loop-edge attached to it. Observe that the operator $D^\alpha$ is in fact of the form of Laplacian operators studied in \cite{Kozyrev2004}.
Its kernel function $T(x,y)$ satisfies condition the important condition (6) of that article which says that 
\[
\absolute{x-y}_p=\text{const.}\quad\Rightarrow\quad T(x,y)=\text{const.}
\]
and in that case, \cite[Thm.\ 3]{Kozyrev2004} shows that the Kozyrev wavelets $\psi_B$ supported in $B\subset \Omega_V$ are eigenfunctions, and also provides an explicit expression for the eigenvalues attached to Kozyrev wavelets. In the case of a vertex $v$ having loop-edge attached to it, and $B\subset U_v$, it takes the form:
\begin{align*}
\lambda_B=\lambda_{d,n}
&=\int_{\mathset{y\in \Omega_V\mid (x,y)\in\Omega_E}\setminus B}\absolute{p^{-d}n-y}_p^{-2\alpha}\,dy
+p^{d(1+2\alpha)}
\\
&=\int_{\mathset{y\in\Omega_V\mid(x,y)\in\Omega_E}\setminus B}\dist(B,y)^{-2\alpha}\,dy+p^{d(1+2\alpha)}
\end{align*}
where
\[
B=\mathset{y\in\mathds{Q}_p\mid\absolute{p^{d}x-n}_p\le 1}
\]
with $d\in\mathds{Z}$, and $n\in\mathds{Q}_p$
is the fractional part of a representative of an element of
$\mathds{Q}_p/\mathds{Z}_p$. 
This can be compared with the case of a simple graph as follows: it holds true that
\[
\lambda_B=\lambda_B^\sigma
+p^{d(1+2\alpha)}
+\int_{U_v\setminus B}\dist(B,y)^{-2\alpha}\,dy
\]
where
\[
\lambda_B^\sigma
=\int_{\mathset{y\in \Omega_V\mid(x,y)\in\Omega_E}\setminus U_v}
\absolute{x-y}_p^{-2\alpha}\,dy
=\int_{\mathset{y\in \Omega_V\mid(x,y)\in\Omega_E}\setminus U_v}
\dist(B,y)^{-2\alpha}\,dy
\]
is the eigenvalue if $v$ has no loop-edge attached. Since $\lambda_B^\sigma=\lambda_C^\sigma$ if disc $C\subseteq B$, it now follows that
$\lambda_C<\lambda_B$ if $C$ is strictly contained in $B$.
This shows that the multiplicity of $\lambda_B$ is finite.
It now follows that $D^\alpha$ is a compact operator, if all vertices have a loop-edge attached, and in all other cases  some eigenvalue has infinite multiplicity, as asserted. 
\end{proof}

\begin{remark}
It
has already been observed that Z\'u\~niga operators are in general not compact \cite[Ex.\ 1]{brad_heatMumf}.
\end{remark}

\begin{Corollary}
Assume that each vertex of $G$ has a loop-edge attached to it.
Then the heat kernel for $D^\alpha$ exists and has the form
\[
p_{D^\alpha,t}(x,y)=\sum\limits_{\lambda\in\Spec(D^\alpha)}
e^{-\lambda t}
f_\lambda(x)\overline{f_\lambda(y)}
\]
where $f_\lambda\in L^2(\Omega_V)$ is a normalised eigenfunction of $D^\alpha$ for eigenvalue $\lambda\in\Spec(D^\alpha)$, 
for $t>0$ and $\alpha>0$.
\end{Corollary}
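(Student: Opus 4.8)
The plan is to read off the eigendecomposition from the spectral theorem and then assemble the heat semigroup by functional calculus, exactly as in the finite-dimensional case treated for $\Delta_\rho^\alpha$ in the preceding Lemma, but now attending to the infinite-dimensional convergence issues. First I would combine the two structural facts already in hand: by Corollary \ref{Laplacians} the operator $D^\alpha$ is self-adjoint and positive semi-definite, and by Lemma \ref{compactnessCriterion}, under the standing hypothesis that every vertex carries a loop-edge, its spectrum is discrete with each eigenvalue of finite multiplicity. The Kozyrev wavelets are eigenfunctions and, together with the (finite-dimensional, by Lemma \ref{Betti0}) kernel, furnish an orthonormal basis $\{f_\lambda\}$ of $L^2(\Omega_V)$ of eigenfunctions with real eigenvalues $\lambda\ge 0$. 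Moreover, the explicit eigenvalue formula in the proof of Lemma \ref{compactnessCriterion}, through the term $p^{d(1+2\alpha)}$ and the self-interaction integral over $U_v\setminus B$, forces $\lambda\to+\infty$ as the supporting discs shrink.

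Second, using this eigenbasis I would define the heat semigroup by the functional calculus, setting $e^{-tD^\alpha}=\sum_{\lambda}e^{-\lambda t}P_\lambda$, where $P_\lambda$ denotes orthogonal projection onto the $\lambda$-eigenspace; this is the unique bounded self-adjoint solution operator of the Cauchy problem $\partial_t u=-D^\alpha u$. Applying $e^{-tD^\alpha}$ to a function and expanding in the eigenbasis then produces, at the formal level, precisely the integral kernel
\[
p_{D^\alpha,t}(x,y)=\sum_{\lambda\in\Spec(D^\alpha)}e^{-\lambda t}f_\lambda(x)\overline{f_\lambda(y)},
\]
so that the only substantive task remaining is to justify that this series genuinely represents the Schwartz kernel of the operator $e^{-tD^\alpha}$.

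The main obstacle is exactly this convergence step, namely showing that for every $t>0$ the series converges in $L^2(\Omega_V\times\Omega_V)$ and hence defines a Hilbert--Schmidt (in fact trace-class) operator agreeing with $e^{-tD^\alpha}$. Here I would use the eigenvalue lower bounds from Lemma \ref{compactnessCriterion}: indexing the wavelets by scale and position $(d,j,n)$ and inserting the growth $\lambda\gtrsim p^{d(1+2\alpha)}$ into $\sum_\lambda e^{-\lambda t}$, I would dominate the spectral sum by a convergent series for each fixed $t>0$. This yields the trace-class property, whence the $L^2$-convergence of the kernel series and, via the rapid decay of $e^{-\lambda t}$ against the uniform bounds on the $f_\lambda$, its locally uniform convergence as well. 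Once convergence is secured, identifying the sum with the heat kernel of $D^\alpha$ is routine, mirroring the finite-dimensional argument already invoked for $\Delta_\rho^\alpha$, and the trace-class conclusion is what later makes $\trace(p_{D^\alpha,t})$ well defined in the main theorem.
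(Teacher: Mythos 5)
Your proposal is essentially correct, but it takes a genuinely different route from the paper. You rebuild the heat kernel from scratch via the spectral theorem: self-adjointness and positive semi-definiteness from Corollary \ref{Laplacians}, discreteness of the spectrum with finite multiplicities and eigenvalues tending to $+\infty$ from Lemma \ref{compactnessCriterion}, then functional calculus $e^{-tD^\alpha}=\sum_\lambda e^{-\lambda t}P_\lambda$ and an explicit summability estimate to get a trace-class operator whose Schwartz kernel is the asserted series. The paper instead leans on Z\'u\~niga's result \cite[Thm.\ 4.2]{ZunigaNetworks} that the transition function $p_{D^\alpha,t}(x,\cdot)$ already exists as a probability measure solving the Cauchy problem; the proof then merely checks that the proposed series integrates each eigenfunction $f_\lambda$ to $e^{-\lambda t}f_\lambda(x)$, matching the known solution, and concludes by density of the eigenbasis that the series is the distribution function of that measure. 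Your approach buys more: it actually justifies convergence of the spectral sum (which the paper's proof silently exchanges with an integral) and delivers the trace-class property needed later for $\trace(p_{D^\alpha,t})$; the paper's approach buys the probabilistic interpretation for free and avoids any convergence analysis. Two small imprecisions to fix in your write-up: the orthogonal complement of the span of the Kozyrev wavelets is not the kernel of $D^\alpha$ (which is only $b_0(G)$-dimensional by Lemma \ref{Betti0}) but the larger, still finite-dimensional, space of functions constant on the discs constituting the $U_v$, which decomposes into eigenspaces including but not equal to $\ker D^\alpha$; and the normalised wavelets are not uniformly bounded in sup-norm (their sup-norms grow like $p^{d/2}$ as the supporting discs shrink), so the locally uniform convergence must be argued from the superexponential decay $e^{-tp^{d(1+2\alpha)}}$ beating both the multiplicity growth and the pointwise growth of $f_\lambda(x)\overline{f_\lambda(y)}$, not from a uniform bound on the eigenfunctions. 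Neither issue is fatal, since the same estimate you already invoke closes both gaps.
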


\begin{proof}
Taking as initial condition
\[
h_0(x)=f_\lambda(x)
\]
obtain from the Cauchy problem:
\[
h(t,x)=\int_{\Omega_V}h_0(y)\,p_{D^\alpha,t}(x,dy)
=e^{-\lambda t}f_\lambda(x)
\]
for $t>0$ and $\alpha>0$, where $p_{D^\alpha,t}(x,\cdot)$ is the probability measure giving rise to a strong Markov process according to \cite[Thm.\ 4.2]{ZunigaNetworks}.
Now, the proposed density function integrates the initial condition in this way:
\begin{align*}
\int_{\Omega_V}f_\lambda(y)&\,p_{D^\alpha,t}(x,y)\,dy
=\sum\limits_{\lambda'\in\Spec(D^\alpha)}e^{-\lambda' t}\int_{\Omega_V}f_\lambda(y)f_{\lambda'}(x)\overline{f_{\lambda'}(y)}\,dy
\\
&=e^{\lambda t}f_\lambda(x)
\end{align*}
giving rise to the same solution of the Cauchy problem. Hence, this happens for any initial condition in $L^2(\Omega_V)$, and the proposed function is the distribution function of $p_{D^\alpha,t}(x,\cdot)$ for any $x\in\Omega_V$. In other words, the heat kernel for $D^\alpha$ exists and has the asserted form.
\end{proof}

In order to formulate a $p$-adic version of an index theorem, two cases need to be considered: either the case where each vertex of $G$ has a loop-edge, or some vertex does not. 
In the latter case,
consider the finite approximation operators $D^\alpha_\ell$ defined as:
\[
D_\ell^\alpha\colon L^2(\Omega_{V,\ell})\to L^2(\Omega_{V,\ell})
\]
with 
\[
D^\alpha_\ell f(x)=
\int_{\Omega_{V,\ell}}\frac{f(x)-f(y)}{\absolute{x-y}^{2\alpha}}\,d_\ell y
\]
where
\[
\Omega_{V,\ell}=\Omega_V\mod p^\ell\mathds{Z}_p
\]
is obtained by from $\Omega_V$ by partitioning into a disjoint union of discs of radius $p^{-\ell}$ for $\ell\in\mathds{N}$, and where
$d_\ell y$ is the counting measure on the finite set $\Omega_{V,\ell}$. Notice that the expression 
\[
\absolute{x-y}_p
\]
in the integral is well-defined.

\begin{thm}\label{IndexTheorem}
If every vertex of $G$ has a loop-edge, then
\[
\lim\limits_{t\to\infty}\left(
\trace(p_{D^\alpha,t})-\trace(p_{\Delta_\rho^\alpha,t})\right)
=\chi(G')=\ind(d^\alpha)
\]
for $\alpha>0$.
Otherwise, it holds true that
\[
\lim\limits_{\ell\to\infty}\lim\limits_{t\to\infty}
\left(
\trace(p_{D_\ell^\alpha,t})-\trace(p_{\Delta_\rho^\alpha,t})
\right)
=\chi(G')=\ind(d^\alpha)
\]
for $\alpha>0$. The graph $G'$ is obtained from $G$ by removing all loop-edges.
\end{thm}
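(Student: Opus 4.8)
The plan is to carry out the classical McKean--Singer heat-trace computation: the contribution of every positive eigenvalue to a heat trace decays as $t\to\infty$, so each limit $\lim_{t\to\infty}\trace(\cdot)$ isolates the dimension of the corresponding kernel. Once both traces have been reduced to kernel dimensions, the equalities $\dim\ker D^\alpha-\dim\ker\Delta_\rho^\alpha=\chi(G')=\ind(d^\alpha)$ are already furnished by Corollary \ref{ind=chi}, so all the real work lies in justifying the two limits and, in the non-looped case, in analysing the finite approximations.

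First suppose every vertex of $G$ carries a loop-edge. By Lemma \ref{compactnessCriterion} the spectrum of $D^\alpha$ is then discrete with finite multiplicities, and the eigenvalue computation given there shows the eigenvalues grow without bound as the supporting discs shrink; in particular $0$ is isolated in the spectrum and there is a smallest positive eigenvalue $\lambda_1>0$. Using the spectral expansion of $p_{D^\alpha,t}$ established above and the analogous expansion for the finite-dimensional operator $\Delta_\rho^\alpha$, I would write each trace as $\trace(p_{D^\alpha,t})=\dim\ker D^\alpha+\sum_{\lambda>0}m_\lambda e^{-\lambda t}$ and similarly for $\Delta_\rho^\alpha$. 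The tail is then dominated by $\sum_{\lambda>0}m_\lambda e^{-\lambda t}\le e^{-\lambda_1(t-1)}\bigl(\trace(p_{D^\alpha,1})-\dim\ker D^\alpha\bigr)$ for $t\ge1$, which tends to $0$; the finite sum for $\Delta_\rho^\alpha$ is even easier. Passing to the limit and subtracting yields $\dim\ker D^\alpha-\dim\ker\Delta_\rho^\alpha$, and Corollary \ref{ind=chi} identifies this with $\chi(G')=\ind(d^\alpha)$.

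Now suppose some vertex has no loop-edge. Then, by Lemma \ref{compactnessCriterion}, $D^\alpha$ possesses an eigenvalue of infinite multiplicity, so $\trace(p_{D^\alpha,t})$ is undefined and the finite approximations $D_\ell^\alpha$ must be used instead. Each $D_\ell^\alpha$ is a finite-dimensional, self-adjoint, positive semi-definite operator on $L^2(\Omega_{V,\ell})$, so its heat trace is a finite sum of exponentials and, exactly as above, $\lim_{t\to\infty}\trace(p_{D_\ell^\alpha,t})=\dim\ker D_\ell^\alpha$. It remains to evaluate this dimension and let $\ell\to\infty$. Arguing as in Lemma \ref{Betti0}, $D_\ell^\alpha$ is the weighted graph Laplacian of the finite graph $G_\ell$ whose vertices are the radius-$p^{-\ell}$ subdiscs of $\Omega_V$ and in which a subdisc of $U_v$ is joined to a subdisc of $U_w$ precisely when $(x,y)\in\Omega_E$ for points $x,y$ of the respective subdiscs; hence $\dim\ker D_\ell^\alpha$ is the number of connected components of $G_\ell$. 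For $\ell$ large enough that each radius-$p^{-\ell}$ disc lies inside a single $U_v$ --- possible because the $U_v$ are at positive mutual distance by \eqref{setCondition} --- the constant-distance condition \eqref{setCondition} forces every subdisc of $U_v$ to be joined to every subdisc of $U_w$ whenever $v\sim w$; following a spanning tree of each connected component of $G'$ (and, at an isolated looped vertex, using its loop to join the subdiscs pairwise) then collapses all subdiscs lying over one component of $G$ into one component of $G_\ell$. Thus $\dim\ker D_\ell^\alpha=b_0(G)=b_0(G')$ for all large $\ell$, so the outer limit equals $b_0(G)=\dim\ker D^\alpha$ by Lemma \ref{Betti0}; subtracting $\dim\ker\Delta_\rho^\alpha=b_1(G')$ from Lemma \ref{Betti1} again gives $\chi(G')=\ind(d^\alpha)$ through Corollary \ref{ind=chi}.

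The main obstacle I anticipate is the non-looped case, and specifically the claim that the number of connected components of $G_\ell$ stabilises to $b_0(G)$: one must check that the $\Omega_E$-adjacency genuinely interconnects all subdiscs lying over a single component of $G'$, which is where the constant-distance hypothesis \eqref{setCondition} is essential, and one must tacitly exclude vertices that are isolated in $G'$ and carry no loop --- over such a vertex every locally constant function would lie in $\ker D^\alpha$, so the component count of $G_\ell$ would diverge, in keeping with the fact that $\dim\ker D^\alpha$ would then be infinite and Lemma \ref{Betti0} inapplicable. A more routine point to record is the trace-class justification in the looped case: the tail estimate presupposes $\trace(p_{D^\alpha,t})<\infty$ for $t>0$, which is precisely the finiteness that holds when, and only when, every vertex is looped.
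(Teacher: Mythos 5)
Your argument is correct, but it reaches the conclusion by a genuinely different route from the paper's. The paper first establishes a finite-$t$ identity, namely $\trace(p_{D_\ell^\alpha,t})-\trace(p_{\Delta_\rho^\alpha,t})=\chi(G')+\sum_{\lambda\in\Spec_{\res}(D_\ell^\alpha)}e^{-\lambda t}$ (equation \eqref{traceDiff}): it splits $\Spec(D_\ell^\alpha)$ into the part coming from the weighted graph Laplacian of $G$ on vertex-constant functions and a residual part, and uses the supersymmetric cancellation of the common positive spectrum of $\delta_\rho^\alpha d^\alpha$ and $d^\alpha\delta_\rho^\alpha$ against $\trace(p_{\Delta_\rho^\alpha,t})$; only then does it let $t\to\infty$ (and $\ell\to\infty$). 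You instead take $t\to\infty$ in each trace separately, reducing everything to $\dim\ker$'s and invoking Corollary \ref{ind=chi}; in the non-looped case you identify $\dim\ker D_\ell^\alpha$ directly as $b_0$ of the subdisc graph $G_\ell$ rather than via the decomposition of the spectrum. Your route is more elementary and never needs the positive-spectrum cancellation, at the price of losing the Roth-type expansion valid for finite $t$ that the paper's identity provides (and which the introduction advertises). The points you flag are the right ones and are at the same level of rigour as the paper itself: the finiteness of $\trace(p_{D^\alpha,t})$ in the all-loops case (which follows from the explicit growth $\lambda_B\ge p^{d(1+2\alpha)}$ against the count of roughly $p^d$ wavelets at level $d$, though neither you nor the paper writes this out), the tacit restriction of the integral defining $D_\ell^\alpha$ to $\mathset{y\mid(x,y)\in\Omega_E}$, and the tacit exclusion of isolated vertices, without which Lemma \ref{Betti0} already fails. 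One small caution: Lemma \ref{compactnessCriterion} calls $D^\alpha$ ``compact'' while its eigenvalues tend to infinity, so what you actually use (and correctly state) is discreteness of the spectrum with finite multiplicities and a positive spectral gap above $0$, i.e.\ that $e^{-tD^\alpha}$ is trace class, not compactness of $D^\alpha$ in the usual sense.
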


\begin{proof}
In both cases, each last equality is stated in Corollary \ref{ind=chi}.

\smallskip
In the bounded case, i.e.\ when $G$ is simple,  the operator $D^\alpha$ was identified with a Z\'u\~niga operator, and its spectrum and eigenfunctions were identified \cite[Cor.\ 5]{brad_heatMumf}.
Namely, the Kozyrev wavelets supported in $\Omega_V$, together with finitely many functions constant on $p$-adic discs making up the sets $U_v$, as explained in  the proof of \cite[Lem.\ 4]
{brad_heatMumf}, form an orthogonal eigenbasis of $L^2(\Omega_V)$.
In the case that a vertex $v$ does not have a loop-edge attached to it, the
Kozyrev wavelets supported in $U_v$ are still eigenfunctions of $D^\alpha$, cf.\ the proof of Lemma \ref{compactnessCriterion}. This does not touch on the part of $L^2(\Omega)$ of functions which are constant on  $p$-adic discs covering the sets $U_v$. Hence, those finitely many functions identified in the simple graph case remain eigenfunctions also in the non-simple case.

\smallskip
In the general case, the heat kernel for $D_\ell^\alpha$ exists and has the form:
\[
p_{D^\alpha_\ell,t}(x,y)
=\sum\limits_{\lambda\in\Spec(D^\alpha_\ell)}
e^{-\lambda t}f_\lambda^\ell(x)f_\lambda^\ell(y)
\]
where $f_\lambda^\ell$ is a normalised eigenfunction for $D_\ell^\alpha$.
Accordingly, the following holds true:
\begin{align}\label{traceDiff}
\trace(p_{D_\ell^\alpha,t})
-\trace(p_{\Delta_\rho^\alpha,t})=
\chi(G')+\sum_{\lambda\in\Spec_{\res}(D_\ell^\alpha)}
e^{-\lambda t}
\end{align}
where $\Spec_{\res}(D_\ell^\alpha)$ is the part of the spectrum of $D_\ell^\alpha$ not coming from the (weighted) graph Laplacian of $G$, i.e.\ not from Kozyrev wavelets and some of the other eigenfunctions of $D^\alpha$ described above.
This does make sense, because a Kozyrev wavelet on $\mathds{Q}_p$ takes precisely $p$ values, each on a distinct maximal non-trivial subdisc of its support, and by the construction of $D_\ell$ acting on a finite-dimensional part of $L^2(\Omega_V)$ as the operator $D^\alpha$.

\smallskip
Hence, it follows in the more general case of a vertex possibly not having a loop-edge that
\[
\lim\limits_{\ell\to\infty}\lim\limits_{t\to\infty}
\left(
\trace(p_{D_\ell^\alpha,t})
-\trace(p_{\Delta_\rho^\alpha,t})
\right)=\chi(G')
\]
for $\alpha>0$, as asserted.

\smallskip
In the case that $D^\alpha$ is compact, the identity
\[
\lim\limits_{\ell\to\infty} p_{D_\ell^\alpha,t}
=\sum\limits_{\lambda\in\Spec(D^\alpha)}e^{-\lambda t}f_\lambda(x)f_\lambda(y)
\]
does make sense where $\mathset{f_\lambda}$ is an orthonormal eigenbasis of $L^2(\Omega_V)$ 
for $D^\alpha$,
as the right-hand side converges due to the compactness of the operator. By taking limits in (\ref{traceDiff}), it follows that
\[
\trace(p_{D^\alpha,t})-\trace(p_{\Delta_\rho^\alpha})
=\chi(G')+\sum\limits_{\Spec_{\res}(D^\alpha)}e^{-\lambda t}
\]
where $\Spec(D^\alpha)_{\res}$ is defined in an analogous manner as in the more general case. It follows that 
\[
\lim\limits_{t\to\infty}\left(\trace(p_{D^\alpha,t})-\trace(p_{\Delta^\alpha,t})\right)=\chi(G')
\]
also in this case, as asserted. This proves the theorem.
\end{proof}

\section{Application to $p$-adic Mumford curves}

The $p$-adic index theorem (Theorem \ref{IndexTheorem}) has applications to Mumford curves, because of their property as a compact $p$-adic manifold with an inner graph structure. The theory of Mumford curves can be found in e.g.\ \cite{GvP1980,FP2004}, from which their only property needed is their relationship to finite graphs.
\newline

In the following subsections, different kinds of finite graphs are built from a good fundamental domain $F$ in the sense of \cite[Def.\ I.4.1.3]{GvP1980}, and then Theorem \ref{IndexTheorem} is applied in order to extract information from a Mumford curve via the heat kernels.

\subsection{The genus of a Mumford curve}

Let $F\subset\mathds{Q}_p$ be a good fundamental domain (in the sense of \cite[Def.\ I.4.1.3]{GvP1980})
of the action of the Schottky group giving rise to a Mumford curve $X$ of genus $g(X)>0$. Let $T(F)$ be the subtree of the Bruhat-Tits tree $\mathscr{T}_p$ of $\mathds{Q}_p$ associated with $F$, defined in \cite[\S 4.1]{brad_heatMumf} as the \emph{dendrogram} for the finitely many points, each taken from a distinct inner hole of $F$.
Then a so-called \emph{semi-stable reduction graph} of $X$ is given by attaching $g(X)$ edges to $T(F)$. This is in fact the quotient graph of \cite[Ch.\ I.3]{GvP1980}
obtained by the action of the Schottky group of $X$ on an infinite subtree of the Bruhat-Tits tree $\mathscr{T}_p$ of $\mathds{Q}_p$.
This is a simple graph which becomes a so-called \emph{stable} graph after contracting all edges attached to a vertex of degree $2$.
Let $G=(V,E(G))$ be the graph obtained from that simple graph by attaching to all vertices a loop-edge. 
Thus
\[
F=\bigcup\limits_{v\in V(T_F)}U_v
\]
where $U_v\subset\mathds{Q}_p$ is a holed disc, as explained in \cite[\S 4.1]{brad_heatMumf}.
The corresponding edge space is
\[
\Omega_E=\bigsqcup\limits_{e\in E(G)}U_{o(e)}\times U_{t(e)}
\]
and the 
results of the previous section can be applied.

\begin{Corollary}
It holds true that
\[
\ind(d^\alpha_G)=\lim\limits_{t\to\infty}
\left(
\trace(p_{D^\alpha,t})-\trace(p_{\Delta_\rho^\alpha})
\right)
=1-g(X)=\chi(X)
\]
where $\chi(X)$ is the Euler characteristic  of the Mumford curve $X$ w.r.t.\ its simplicial cohomology.
\end{Corollary}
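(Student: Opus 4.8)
The plan is to apply Theorem~\ref{IndexTheorem} directly to the graph $G$ constructed from the good fundamental domain $F$, and then to identify the resulting Euler characteristic $\chi(G')$ with the Euler characteristic $\chi(X)$ of the Mumford curve. Since $G$ is built by attaching a loop-edge to \emph{every} vertex of the semi-stable reduction graph, the hypothesis of the first case of Theorem~\ref{IndexTheorem} is satisfied verbatim, so the single-limit formula
\[
\lim\limits_{t\to\infty}\left(\trace(p_{D^\alpha,t})-\trace(p_{\Delta_\rho^\alpha,t})\right)=\chi(G')=\ind(d^\alpha_G)
\]
holds immediately for $\alpha>0$. Here $G'=(V,E')$ is $G$ with all loop-edges removed, which by construction is precisely the semi-stable reduction graph of $X$ (up to the stable contraction described in the text). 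The first two equalities of the Corollary are thus nothing more than an instantiation of the main theorem, and require no further work beyond checking that the edge space $\Omega_E$ and the vertex discs $U_v$ arising from $F$ satisfy the standing condition~(\ref{setCondition}); this should follow from the dendrogram construction of \cite[\S 4.1]{brad_heatMumf}, where the holed discs $U_v$ are pairwise disjoint and ultrametrically separated.

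The only genuine content is therefore the final equality $\chi(G')=1-g(X)=\chi(X)$, which I would establish by a standard computation of Betti numbers of the reduction graph. By Corollary~\ref{ind=chi} we have $\chi(G')=b_0(G')-b_1(G')$. The semi-stable reduction graph $G'$ is connected (it is obtained from the tree $T(F)$ by attaching $g(X)$ extra edges, and attaching edges preserves connectedness), so $b_0(G')=1$. Since $T(F)$ is a tree it contributes no independent cycles, and each of the $g(X)$ edges attached to it creates exactly one independent loop; hence $b_1(G')=g(X)$. Therefore $\chi(G')=1-g(X)$, and this equals $\chi(X)$ because the first Betti number of the reduction graph of a Mumford curve of genus $g$ equals its genus, reflecting that the simplicial (graph) cohomology of $X$ computes its topological Euler characteristic $1-g(X)$.

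The main obstacle I anticipate is not analytic but conceptual: one must argue carefully that the first Betti number of the reduction graph genuinely equals the genus $g(X)$, and that this combinatorial Euler characteristic coincides with the Euler characteristic of $X$ with respect to its simplicial cohomology. This is the point where the theory of Mumford curves enters, via the identification of $G'$ as the quotient graph of the Schottky group action on a subtree of the Bruhat--Tits tree $\mathscr{T}_p$ \cite[Ch.\ I.3]{GvP1980}; the rank of the free Schottky group is $g(X)$, and the first homology of the quotient graph has rank equal to this rank. I would invoke this standard fact from \cite{GvP1980,FP2004} rather than reprove it. Once the equality $b_1(G')=g(X)$ is granted, the remainder is the purely formal chain of equalities above, and the Corollary follows.
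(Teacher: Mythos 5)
Your proposal is correct and follows essentially the same route as the paper: the paper's proof likewise invokes Theorem~\ref{IndexTheorem} directly (the loop-edge hypothesis being satisfied by construction), uses the connectedness of $G$ for $b_0(G')=1$, and cites the identity $g(X)=b_1(G')$ from the theory of Mumford curves \cite[Thm.\ III.2.12.2]{GvP1980}. Your additional verification that $b_1(G')=g(X)$ via the tree-plus-$g$-edges structure, and your check of condition~(\ref{setCondition}) for the holed discs, are sound elaborations of what the paper leaves implicit.
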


\begin{proof}
This is an immediate consequence of Theorem \ref{IndexTheorem}, because the graph $G$ is connected, and $g(X)=b_1(G')$ by the theory of Mumford curves, cf.\ \cite[Thm.\ III.2.12.2]{GvP1980}.
\end{proof}

\subsection{Counting holes of a good fundamental domain}

The graph $G$ is now defined as the complete graph (including all loop-edges) on the set  $\mathscr{H}$ of inner holes of a good fundamental domain $F\subset\mathds{Q}_p$.
Assume that the number of holes is $N$. Then
\[
\Omega_V=\bigsqcup\limits_{H\in\mathscr{H}}H\subset\mathds{Q}_p
\]
is the set defined by the vertices of $G$, whereas 
\[
\Omega_E=\Omega_V\times\Omega_V
\]
is defined by the edges of $G$.

\begin{Corollary}
It holds true that
\[
\ind(d_G^\alpha)
=\lim\limits_{t\to\infty}
\left(
\trace(p_{D^\alpha,t})-\trace(p_{\Delta_\rho^\alpha,t})
\right)
=-\frac12N^2+\frac32N
\]
where $N$ is the number of holes of a good fundamental domain $F\subset\mathds{Q}_p$ for $X$.
\end{Corollary}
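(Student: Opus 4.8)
The plan is to reduce everything to Theorem~\ref{IndexTheorem} and a single combinatorial count. Since $G$ is the complete graph on the set $\mathscr{H}$ of inner holes taken \emph{together with} all loop-edges, every vertex of $G$ carries a loop-edge. Hence the hypothesis of the first (loop-edge) case of Theorem~\ref{IndexTheorem} is satisfied, and no $\ell\to\infty$ approximation is required. Applying the theorem directly gives
\[
\lim_{t\to\infty}\left(\trace(p_{D^\alpha,t})-\trace(p_{\Delta_\rho^\alpha,t})\right)=\chi(G')=\ind(d_G^\alpha)
\]
for $\alpha>0$, where $G'$ is obtained from $G$ by deleting all loop-edges.

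Next I would identify $G'$ explicitly. Removing the loop-edges from the complete-graph-with-loops $G$ leaves exactly the simple complete graph $K_N$ on the $N$ holes. Therefore $\absolute{V}=N$, while the set of positively oriented simple edges has one element per unordered pair of distinct holes, so
\[
\absolute{E_+}=\frac{N(N-1)}{2}.
\]

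The remaining step is the elementary evaluation of the Euler characteristic using its definition $\chi(G')=\absolute{V}-\absolute{E_+}$:
\[
\chi(G')=N-\frac{N(N-1)}{2}=-\frac12 N^2+\frac32 N,
\]
which is precisely the asserted value. I do not anticipate a genuine obstacle here, as the analytic content is carried entirely by Theorem~\ref{IndexTheorem}; the only computation is the edge-count of a complete graph, which is routine. The single point worth recording explicitly is the verification that the loop-edge hypothesis holds, so that the clean (non-approximated) form of the limit applies and the final chain of equalities follows immediately from Corollary~\ref{ind=chi}.
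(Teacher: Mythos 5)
Your proposal is correct and follows exactly the paper's route: the paper's own proof simply invokes Theorem~\ref{IndexTheorem} as an immediate consequence, and you have filled in the same steps (loop-edge hypothesis, $G'=K_N$, $\absolute{E_+}=\tfrac{N(N-1)}{2}$, and the arithmetic $N-\tfrac{N(N-1)}{2}=-\tfrac12 N^2+\tfrac32 N$) explicitly. No issues.
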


\begin{proof}
This is an immediate consequence of Theorem \ref{IndexTheorem}.
\end{proof}

\begin{remark}
Observe that the genus $g(X)$ can be extracted from the number of holes:
\[
\ind(d_G^\alpha)=-\frac12(2g-1)^2+\frac32(2g-1)
\]
as $N=2g-1$ holds true \cite[\S I.4]{GvP1980}.
\end{remark}

\subsection{The number of vertices of the reduction graph}

Take as $G$ now the complete graph on the set of vertices of $T(F)$ for a good fundamental domain $F\subset\mathds{Q}_p$ for a Mumford curve $X$. Again, assume that all loop-edges are present.

\begin{Corollary}
It holds true that
\[
\ind(d_G^\alpha)
=
\lim\limits_{t\to\infty}
\left(
\trace(p_{D^\alpha,t})-\trace(p_{\Delta^\alpha,t})
\right)
=-\frac12\absolute{V(T_F)}^2
+\frac32\absolute{V(T_F)}
\]
where $F\subset\mathds{Q}_p$ is a good fundamental domain of $X$.
\end{Corollary}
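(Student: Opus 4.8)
The plan is to specialise Theorem \ref{IndexTheorem} to the graph $G$ at hand. By construction $G$ is the complete graph on $V(T_F)$ \emph{with all loop-edges present}, so every vertex carries a loop-edge and we land in the first (compact) case of the theorem. That case immediately supplies the limiting identity
\[
\lim\limits_{t\to\infty}\left(\trace(p_{D^\alpha,t})-\trace(p_{\Delta_\rho^\alpha,t})\right)=\chi(G')=\ind(d_G^\alpha),
\]
so the only remaining task is to evaluate $\chi(G')$ for the graph $G'$ obtained by deleting the loop-edges.

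First I would identify $G'$ explicitly: removing the loops from the complete-graph-with-loops on $V(T_F)$ returns the ordinary complete graph on $n:=\absolute{V(T_F)}$ vertices. Its vertex count is $\absolute{V}=n$, and its number of positively oriented simple edges is $\absolute{E_+}=\binom{n}{2}=\frac{n(n-1)}{2}$, one for each unordered pair of distinct vertices.

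Finally I would substitute into the Euler-characteristic formula $\chi(G')=\absolute{V}-\absolute{E_+}$ introduced just before Corollary \ref{ind=chi}, obtaining
\[
\chi(G')=n-\frac{n(n-1)}{2}=-\frac12 n^2+\frac32 n,
\]
which is the asserted expression with $n=\absolute{V(T_F)}$. I do not anticipate any genuine obstacle here: the argument is a direct specialisation of the index theorem, wholly parallel to the preceding hole-counting corollary, the only difference being that the relevant complete graph is built on the vertex set $V(T_F)$ rather than on the set of holes $\mathscr{H}$. The single point meriting a check is that the loop-edge hypothesis of the first case of Theorem \ref{IndexTheorem} is satisfied, and this is guaranteed outright by the stipulation that all loop-edges are present.
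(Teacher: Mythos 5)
Your proposal is correct and follows the same route as the paper, which simply cites Theorem \ref{IndexTheorem}; you merely make explicit the computation $\chi(G')=n-\binom{n}{2}=-\frac12 n^2+\frac32 n$ for the complete graph on $n=\absolute{V(T_F)}$ vertices, which is exactly what the paper leaves implicit (and parallels its preceding hole-counting corollary).
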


\begin{proof}
This follows immediately from Theorem \ref{IndexTheorem}.
\end{proof}

\begin{remark}
The number of vertices of $T_F$ 
is a statistic about the geometry of a Mumford curve. E.g.\ in the case $g=1$, this number 
equals the thickness
of the annulus $F$, i.e.\ the negative $p$-adic valuation of  the parameter $q\in \mathds{Z}_p$ of the corresponding $p$-adic  Tate curve. Consult e.g.\ \cite[Ch.\ 5.1]{FP2004} for more about the Tate elliptic curve.
\end{remark}

\section*{Acknowledgements}

Andrew and  John Bradley,
\'Angel Mor\'an Ledezma and David Weisbart are warmly thanked for fruitful discussions.
Wilson Z\'u\~{n}iga-Galindo is warmly thanked for posing such a nice problem.
This work is partially supported by the Deutsche Forschungsgemeinschaft under project number 469999674.

\bibliographystyle{plain}
\bibliography{biblio}

\end{document}